\documentclass[preprint,12pt,letterpaper]{elsarticle}
\usepackage[T1]{fontenc}
\usepackage{lmodern}
\usepackage{microtype}
\usepackage[margin=1.0in]{geometry}
\usepackage{amsmath,amssymb,amsthm,mathtools}
\usepackage{xcolor}
\usepackage{booktabs,needspace}
\usepackage[hidelinks]{hyperref}
\numberwithin{equation}{section}
\newtheorem{theorem}{Theorem}[section]
\newtheorem{proposition}[theorem]{Proposition}
\newtheorem{lemma}[theorem]{Lemma}
\theoremstyle{remark}
\newtheorem{remark}[theorem]{Remark}
\newenvironment{bluechange}{\begingroup\color{black}}{\endgroup}
\newcommand{\R}{\mathbb R}
\newcommand{\dd}{\,\mathrm d}
\newcommand{\one}{\mathbf 1}
\newcommand{\zero}{\mathbf 0}
\newcommand{\bfF}{\mathbf F}
\newcommand{\bfD}{\mathbf D}
\newcommand{\bfS}{\mathcal S}
\newcommand{\bfR}{\mathcal R}
\newcommand{\bfQ}{\mathcal Q}
\newcommand{\bfZ}{\mathbf Z}
\newcommand{\bfz}{\mathbf z}
\newcommand{\bfs}{\mathbf s}
\newcommand{\bfr}{\mathbf r}
\newcommand{\bfw}{\mathbf w}
\newcommand{\bfy}{\mathbf y}
\newcommand{\bfW}{\mathbf W}
\newcommand{\norm}[1]{\lVert #1\rVert}
\newcommand{\ceil}[1]{\left\lceil #1\right\rceil}
\newcommand{\LDz}{\boldsymbol z}
\newcommand{\LDZ}{\boldsymbol Z}
\newcommand{\LDD}{\mathsf D}
\DeclareMathOperator{\diag}{diag}
\begin{document}
\begin{frontmatter}
\title{\fontsize{15}{18}\selectfont Sharp Thresholds for Linear Determinacy and Zero Wave Speed in Lotka–Volterra Competition–Diffusion Systems}
\author[a,b]{Shizhao Ma}
\author[a]{Dongyuan Xiao}
\author[c]{Maolin Zhou}
\address[a]{School of Mathematical Sciences, Shanghai Jiao Tong University, Shanghai 200240, P.R. China}
\address[b]{Institute of Natural Sciences, Shanghai Jiao Tong University, Shanghai 200240, P.R. China}
\address[c]{Chern Institute of Mathematics, Nankai University, Tianjin 300071, P.R. China}
\begin{abstract}
We study two sharp competition thresholds in the Lotka–Volterra competition–diffusion system: the zero-speed threshold for bistable traveling fronts and the linear-determinacy threshold for monostable traveling fronts. Building on our previous results on propagation direction and the classification of traveling waves, we derive explicit Gaussian integral representations of both critical parameters, without involving unknown traveling-wave profiles or speeds. These formulas provide exact characterizations of the optimal competition parameters separating the corresponding propagation regimes. The derivation of these representations was assisted by ChatGPT (OpenAI) Pro, while the underlying theoretical foundations were established in our earlier works.
\end{abstract}
\begin{keyword}
Lotka--Volterra competition--diffusion system\sep Bistable traveling front\sep Zero-speed threshold\sep Gaussian integral representation\sep Heat--reaction approximation\sep Linear determinacy
\MSC[2020] 35C07, 35K57, 65M12, 92D25
\end{keyword}
\end{frontmatter}

\section{Introduction and main results}
We study the one-dimensional Lotka--Volterra competition--diffusion system
\begin{equation}\label{eq:original}
\begin{cases}
u_t=u_{xx}+u(1-u-av),\\
v_t=dv_{xx}+rv(1-v-bu),
\end{cases}\qquad t>0,\quad x\in\R.
\end{equation}
Here $u=u(t,x)$ and $v=v(t,x)$ denote the population densities of two competing species. We normalize both the diffusion coefficient and the intrinsic growth rate of the $u$-species to one. Thus, $d>0$ and $r>0$ are, respectively, the diffusion coefficient and intrinsic growth rate of the $v$-species relative to those of the $u$-species. The positive coefficients $a$ and $b$ measure the competitive effects exerted by the $v$-species on the $u$-species and by the $u$-species on the $v$-species, respectively.

\begin{bluechange}
The same system exhibits two distinct critical-parameter phenomena.
The first is a zero-speed threshold for bistable fronts in the strong-competition
regime $a>1$, $b>1$. The second is the threshold separating linearly and
nonlinearly selected minimal traveling-wave speeds in the monostable regime
$0<a<1$, $b>0$. The two regimes have different end states, initial-data
constructions, and notions of wave speed. Both integral representations are stated as separate main theorems in this
introduction: the bistable zero-speed threshold is denoted by $b_*$, while
the monostable linear-determinacy threshold is denoted by $b_0$.
The first theorem is proved in Sections~2--5 and the second in Sections~6--9.
Section~10 compares the two constructions. Neither threshold is
identified with the other.

The second problem relies on the preceding hand-derived theory of
Wu--Xiao--Zhou~\cite{WXXZ}. Their complete classification of minimal
traveling waves and decay rates, especially Theorems~2.13 and~2.16,
establishes the existence and uniqueness of the pulled-to-pushed transition
for all $0<a<1$ and $b>0$. The present paper does not replace their
classification by the Gaussian method; rather, it uses that classification to
compute an exact integral--limit representation of its critical coefficient.
\end{bluechange}

Before considering the interaction between the two species, we recall the corresponding scalar equations. If the $v$-species is absent, then $v\equiv0$ and the first equation of (1.1) becomes
\[
u_t=u_{xx}+u(1-u).
\]
Similarly, if the $u$-species is absent, then $u\equiv0$ and the second equation reduces to
\[
v_t=dv_{xx}+rv(1-v).
\]
These equations belong to the Fisher--KPP family introduced by Fisher~\cite{Fisher} and Kolmogorov, Petrovskii and Piskunov~\cite{KPP}. More generally, the scalar equation
\[
w_t=Dw_{xx}+Rw(1-w),\qquad D>0,\quad R>0,
\]
admits an increasing traveling profile
\[
w(t,x)=W(x+ct),\quad W(-\infty)=0,\quad W(+\infty)=1,
\]
if and only if $c\geq 2\sqrt{DR}$. Consequently, the minimal wave speeds of the uncoupled $u$- and $v$-equations are
\[
c_u=2,\qquad c_v=2\sqrt{dr},
\]
respectively. These scalar limits describe propagation when each species evolves in isolation. We now turn to the coupled system, for which the relevant front in the bistable regime connects the two species-exclusion states.

\subsection{The zero-speed threshold for bistable fronts}

\textcolor{black}{For the bistable zero-speed problem considered in Sections~1--5,} we assume the strong-competition conditions
\begin{equation}\label{eq:assumptions}
a>1,\qquad b>1,\qquad d>0,\qquad r>0.
\end{equation}
The equilibria $(0,1)$ and $(1,0)$ are then stable for the associated kinetic system, and the connecting front is of bistable type. We seek monotone traveling fronts of the form
\[
(u,v)(t,x)=(U,V)(\xi),\qquad \xi=x+ct,
\]
where $c\in\R$ is the propagation speed. The profiles satisfy
\begin{equation}\label{eq:TW}
\begin{cases}
U''-cU'+U(1-U-aV)=0,\\
dV''-cV'+rV(1-V-bU)=0,\\
(U,V)(-\infty)=(0,1),\quad (U,V)(+\infty)=(1,0),\\
U'>0>V'.
\end{cases}
\end{equation}
Here and below, primes denote differentiation with respect to $\xi$. With this convention, $c>0$ means that the $u$-species invades the region occupied by the $v$-species, whereas $c<0$ means that the $v$-species invades the region occupied by the $u$-species. The case $c=0$ corresponds to a standing front.

The existence and uniqueness, up to translation, of the monotone bistable front and the uniqueness of its speed are classical; see Gardner~\cite{Gardner}, Kan-on~\cite{KanOn}, and Nakamura and Ogiwara~\cite{NakamuraOgiwara}. We write this speed as $c=c(d,r,a,b)$. It depends continuously on the coefficients, is strictly decreasing in $a$, and is strictly increasing in $b$~\cite{KanOn,NakamuraOgiwara}. The problem of determining its sign has been studied through standing-wave identities, parameter comparison, and auxiliary competition systems. Sign criteria were obtained by Guo and Lin~\cite{GuoLin}, Chang, Chen and Wang~\cite{ChangChenWang}, and Xiao~\cite{Xiao}; further parameter regions were analyzed by Nakamura and Ogiwara~\cite{NakamuraOgiwara}. Ma, Xiao and Zhou~\cite{MaXiaoZhou} characterized the global zero-speed set by combining a Maxwell-type identity with rigidity of the standing phase curves. Related threshold and diffusion-dependence results are given by Kenne~\cite{Kenne}.

For fixed $a>1$ and $d,r>0$, let $b_*(a,d,r)>1$ denote the unique competition coefficient at which the front is stationary. The threshold theory and strict monotonicity in $b$ give
\begin{equation}\label{eq:sign}
\begin{aligned}
c(d,r,a,b)<0&\quad\Longleftrightarrow\quad 1<b<b_*(a,d,r),\\
c(d,r,a,b)=0&\quad\Longleftrightarrow\quad b=b_*(a,d,r),\\
c(d,r,a,b)>0&\quad\Longleftrightarrow\quad b>b_*(a,d,r).
\end{aligned}
\end{equation}
At zero speed, dividing the second equation of (1.3) by $r$ yields
\begin{equation}\label{eq:standing}
\begin{cases}
U''+U(1-U-aV)=0,\\
\rho V''+V(1-V-bU)=0,
\end{cases}\qquad \rho=\frac dr.
\end{equation}
Thus the standing-wave problem depends on $d$ and $r$ only through the ratio $\rho$, and the threshold can be written as
\begin{equation}\label{eq:B}
b_*(a,d,r)=B(a,\rho),\qquad \rho=\frac dr.
\end{equation}
\begin{bluechange}
Here $B(a,\rho)=b_*(a,\rho,1)$ denotes the same zero-speed threshold
in normalized coordinates.
\end{bluechange}
The threshold theorem of~\cite{MaXiaoZhou}, together with species exchange, also gives the explicit bounds
\begin{equation}\label{eq:Bbounds}
\sqrt a<B(a,\rho)<a^2\qquad(a>1,\ \rho>0).
\end{equation}
The precise background properties are recalled in Proposition~2.1.

This qualitative characterization leaves a representation question. The standing-wave identities retain a dependence on the unknown heteroclinic profile. For example, every monotone standing front satisfies the necessary Maxwell-type identity
\begin{equation}\label{eq:Maxwell}
3ab(\rho-1)E+(a-b)\bigl[\rho(a-1)+(b-1)\bigr]=0,
\end{equation}
where
\[
E=\int_{\R}(U+V-1)^2U'\dd\xi>0;
\]
see~\cite[Theorem~1.1]{MaXiaoZhou}. Since $E$ depends on the standing orbit, this identity alone is not an expression for the threshold in terms of the coefficients. The question addressed here is whether $B(a,\rho)$ can be represented by prescribed integral operations, without using either an unknown traveling-wave profile or the speed function as an input.

The purpose of this first part of the paper is to give such a representation for every $a>1$ and $\rho>0$. Our first main result expresses the threshold as a single limit of explicitly prescribed Gaussian integrals. The construction starts from a fixed logistic transition and alternates an explicit reaction step with heat convolution. Its excess mass detects the sign of the propagation speed, and integration with respect to the competition coefficient recovers the threshold. The qualitative threshold theory of~\cite{MaXiaoZhou} is used as a starting point; the contribution here is the integral representation and its approximation estimates.

We now specify the construction needed to state the first main result. In view of (1.6), we work with the auxiliary parameters $(d,r)=(\rho,1)$. The change of variables $(p,q)=(u,1-v)$ makes this auxiliary system cooperative. Let
\begin{equation}\label{eq:sG}
s(x)=\frac1{1+e^{-x}},\qquad G_{\kappa,h}(x)=\frac{e^{-x^2/(4\kappa h)}}{\sqrt{4\pi\kappa h}},\qquad \kappa,h>0,
\end{equation}
and set
\begin{equation}\label{eq:Ia}
I_a=[\sqrt a,a^2].
\end{equation}
For every integer $m\geq1$, define
\begin{equation}\label{eq:Nm}
N_m=\left\lceil(1+a^2)m^2e^{m^2}\right\rceil,\qquad h_m=\frac m{N_m}.
\end{equation}
Fix $\beta\in I_a$ and start from
\[
p_0^{m,\beta}=q_0^{m,\beta}=s.
\]
For $k=0,\ldots,N_m-1$, define
\begin{subequations}\label{eq:iter}
\begin{align}
p_{k+1}^{m,\beta}(x)&=\int_{\R}G_{1,h_m}(x-y)\left[p_k^{m,\beta}(y)+h_mF_{1,\beta}\bigl(p_k^{m,\beta}(y),q_k^{m,\beta}(y)\bigr)\right]\dd y,\label{eq:iter-p}\\
q_{k+1}^{m,\beta}(x)&=\int_{\R}G_{\rho,h_m}(x-y)\left[q_k^{m,\beta}(y)+h_mF_{2,\beta}\bigl(p_k^{m,\beta}(y),q_k^{m,\beta}(y)\bigr)\right]\dd y.\label{eq:iter-q}
\end{align}
\end{subequations}
where the two reaction polynomials are
\begin{equation}\label{eq:Fdef}
F_{1,\beta}(p,q)=p(1-a-p+aq),\qquad F_{2,\beta}(p,q)=(1-q)(\beta p-q).
\end{equation}
The first component does not depend explicitly on $\beta$; the subscript indicates that the two components belong to the same parameterized reaction vector. The dependence of the iterates on $a$ and $\rho$ is suppressed. Every finite iterate is specified by (1.9)--(1.13).

Define its excess mass by
\begin{equation}\label{eq:Mm}
M_m(a,\rho,\beta)=\int_{\R}\bigl[p_{N_m}^{m,\beta}(x)-s(x)\bigr]\dd x.
\end{equation}
We will prove that this integral is absolutely convergent and that it also admits the reaction-mass expression
\begin{equation}\label{eq:Mm2}
M_m(a,\rho,\beta)=h_m\sum_{k=0}^{N_m-1}\int_{\R}F_{1,\beta}\bigl(p_k^{m,\beta}(x),q_k^{m,\beta}(x)\bigr)\dd x.
\end{equation}
Thus the definition uses a prescribed initial function and a finite sequence of Gaussian convolutions.

\begin{theorem}\label{thm:main}
Let $a>1$ and $d,r>0$. For every $m\geq1$ and $\beta\in I_a$, the recursion (1.12) is well defined, its components take values in $[0,1]$, and the integrals in (1.14) and (1.15) are absolutely convergent and equal. Moreover,
\begin{equation}\label{eq:main}
\displaystyle b_*(a,d,r)=\sqrt a+\lim_{m\to\infty}\int_{\sqrt a}^{a^2}\frac{1}{1+\exp\bigl(M_m(a,d/r,\beta)\bigr)}\dd\beta.
\end{equation}
In particular, the right-hand side depends only on the prescribed coefficients, elementary functions, the Gaussian kernels, and the finite recursions (1.12).
\end{theorem}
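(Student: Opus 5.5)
The plan is to read $M_m(a,\rho,\beta)$ as a discretized excess mass at time $T=m$ of the cooperative system
\[
p_t=p_{xx}+F_{1,\beta}(p,q),\qquad q_t=\rho q_{xx}+F_{2,\beta}(p,q),
\]
with initial data $(s,s)$. This is the system (1.1) with $(d,r,b)=(\rho,1,\beta)$ after the change of variables $(p,q)=(u,1-v)$. I expect this mass to grow like $c(\rho,1,a,\beta)\,m$. Then (1.4) forces $M_m\to-\infty$ for $\beta<B(a,\rho)$ and $M_m\to+\infty$ for $\beta>B(a,\rho)$. Hence the integrand in (1.16) tends to $\one_{\{\beta<B(a,\rho)\}}$ for a.e. $\beta\in I_a$. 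Since it is bounded by $1$, dominated convergence and the bounds (1.7) give the limit $B(a,\rho)-\sqrt a$, and (1.6) finishes the proof.

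\textbf{Step 1: well-posedness and the mass identity.} Since $h_m\le m e^{-m^2}/(1+a^2)$ is tiny, I first check that the explicit reaction map $(p,q)\mapsto(p+hF_1,q+hF_2)$ sends $[0,1]^2$ into itself and is monotone in the cooperative order. Its partial derivatives are $1+h\partial_pF_1\ge0$ and $h\partial_qF_1=hap\ge0$, and similarly for $F_2$. Gaussian convolution preserves both properties, so all iterates lie in $[0,1]$.

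Next I construct discrete super- and subsolutions of exponential form. The aim is to prove $p_k,q_k\le Ce^{\lambda x}$ as $x\to-\infty$ and $1-p_k,1-q_k\le Ce^{-\lambda x}$ as $x\to+\infty$, with $C$ allowed to depend on $m$. These bounds make $p_k-s$ and $F_{1,\beta}(p_k,q_k)$ integrable. Note that $F_1=p(1-p)-ap(1-q)$ is small on both tails.

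For the mass identity I use that $f-\one_{(0,\infty)}\in L^1$ implies $\int(G*f-f)=0$. Applied to the recursion (1.12a), this shows that each step adds exactly $h_m\int F_{1,\beta}(p_k,q_k)$ to the mass. Summing over $k$ gives (1.14)$=$(1.15).

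\textbf{Step 2: consistency over a long horizon.} Let $(P,Q)$ be the exact solution on $[0,m]$. I estimate the splitting error in an exponentially weighted $L^1\cap L^\infty$ norm, and expect a bound of the form $\int|p_{N_m}-P(m)|\le C(a,\rho)\,m\,e^{Lm}h_m$. The local error is $O(h^2)$, uniformly in $\beta\in I_a$, thanks to the uniform tail bounds and parabolic smoothing. The Lipschitz constant $L$ of the reaction is uniform. Because $h_m\lesssim me^{-m^2}$, the total error is $o(1)$. This is presumably why $N_m$ grows like $e^{m^2}$.

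\textbf{Step 3: long-time mass growth.} For fixed $\beta\ne B(a,\rho)$, let $c=c(\rho,1,a,\beta)\neq0$ and let $(U,V)$ be the bistable front from (1.3). I sandwich the solution between Fife--McLeod type sub- and supersolutions,
\[
\bigl(U(x+ct\mp\zeta(t)),\,1-V(x+ct\mp\zeta(t))\bigr)\mp \delta e^{-\mu t},
\]
with $\zeta$ bounded. For this I use that $s$ is trapped between shifted fronts, up to a small perturbation. The sandwich gives $\int(P(m)-s)=cm+O(1)$, hence $|M_m|\to\infty$ with the sign of $c$.

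\textbf{Main obstacle.} I expect the main difficulty to lie in Steps 2--3 together. The bounds must be uniform enough in $\beta$, at least locally away from $B(a,\rho)$, and the $L^1$ control must hold over the growing horizon $[0,m]$. The tail weight must match the decay rates of the fronts, so that the $O(1)$ remainder in Step 3 and the discretization error in Step 2 are both negligible against $|c|m$. Near $\beta=B(a,\rho)$ no uniformity is needed, since a single point has measure zero.
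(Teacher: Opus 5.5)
Your outline follows the paper's proof step for step. The shared steps are: order preservation and exponential tails for the explicit scheme; the discrete mass identity from the zero mean of the Gaussian; an $L^1$ stability-plus-consistency bound of order $Th\,e^{CT}$ that $N_m\sim m^2e^{m^2}$ drives to zero; linear growth of the exact excess mass via a front sandwich; and dominated convergence on $I_a$.

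\textbf{The gap is in Step~3.} A perturbation $\pm\delta e^{-\mu t}$ of equal size in both components is neither a super- nor a subsolution of this system. Away from the interface, $\bfW'\approx0$, so the shift term $\zeta'(t)\bfW'$ is useless there. The supersolution residual is then $\delta(t)\bigl[-\mu\bfr-D\bfF_\beta(\text{end state})\bfr\bigr]+O(\delta^2)$, and the subsolution residual is its negative. You therefore need $D\bfF_\beta(\zero)\bfr<0$ and $D\bfF_\beta(\one)\bfr<0$ componentwise. For $\bfr=(1,1)^\top$ these vectors are $(1-a,\beta-1)^\top$ and $(a-1,1-\beta)^\top$, and each has a positive entry because $a,\beta>1$. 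The fix is the paper's Lemma~2.2:
\begin{itemize}
\item a positive, spatially varying direction $\bfr(\xi)$ equal to $(1,\beta+1)^\top$ near $-\infty$ and $(a+1,1)^\top$ near $+\infty$;
\item shifts with $\zeta_\pm'(t)=\pm\kappa\delta(t)$, with $\kappa$ large enough that $\kappa\bfW'$ dominates on the compact middle region;
\item componentwise truncation to $[0,1]^2$, so that the comparison only uses cooperativity on the square.
\end{itemize}

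\textbf{Smaller points.}
\begin{itemize}
\item The perturbation $\delta(t)\bfr$ is not integrable in $x$, so the sandwich alone does not control $\int(z_1-s)$. You also need exponential tails for the exact solution, $z_i\le e^{x+\omega t}$ and $1-z_i\le e^{-x+\omega t}$ (Lemma~2.3), to discard $|x|\ge Kt$. Your Step~1 bounds cover only the discrete iterates.
\item Your sharper claim $cm+O(1)$ additionally needs $W_1-\one_{(0,\infty)}\in L^1$, that is, exponential approach of the front to its end states. The paper avoids this: an $\eta$-argument gives only $J(t)/t\to c$, which is all that is used.
\item In Step~2 no parabolic smoothing is involved, since $s$ is smooth with $s',s''\in L^1$. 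What you actually need are $L^1$ bounds on $\bfz_t$ and $\bfD\partial_{xx}\bfF_\beta(\bfz)$ growing like $e^{2\Lambda t}$, obtained by differentiating the mild equation (Lemma~4.1).
\item A weighted norm is unnecessary, because $\bfQ_{h,\beta}$ is $(1+h\Lambda)$-Lipschitz in plain $L^1$ on square-valued functions.
\item You need no uniformity in $\beta$. The integrand is bounded by $1$, so pointwise convergence for $\beta\neq B(a,\rho)$ suffices.
\item You do need $\beta\mapsto M_m$ to be measurable for the integral in (1.16) to make sense; the paper proves continuity in Lemma~3.2.
\end{itemize}
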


(1.16) contains no unknown traveling-wave quantity: each finite approximation is determined by the coefficients, the prescribed initial function, and the Gaussian integral recursions. It is a single-limit representation. The number of Gaussian operations increases with $m$, and the step count in (1.11) is chosen to justify convergence.

The key point in the proof is to relate excess mass to front propagation. Let $\bfz(t,x)=(p(t,x),q(t,x))$ be the exact cooperative solution for $(d,r,b)=(\rho,1,\beta)$ with initial data $(s,s)$, and set
\[
J(t;a,\rho,\beta)=\int_{\R}[p(t,x)-s(x)]\dd x.
\]
Using front comparison and exponential spatial tail bounds, we prove
\[
\lim_{t\to\infty}\frac{J(t;a,\rho,\beta)}t=c(\rho,1,a,\beta).
\]
This step does not require convergence to a traveling wave in $L^1(\R)$.

We next compare the Gaussian recursion with the exact evolution. For a time step $h\leq(1+a^2)^{-1}$ and $T=Nh$, we establish the parameter-uniform estimate
\begin{equation}\label{eq:intro-error}
\norm{\bfZ_N-\bfz(T)}_{L^1(\R;\ell^1)}\leq C(a,\rho)\,Th\,e^{2\Lambda T},\qquad \Lambda=2a^2+1,
\end{equation}
where $\bfZ_N$ is the corresponding Gaussian iterate and $C(a,\rho)$ is explicit and independent of $\beta\in I_a$. With $T=m$ and $h=h_m$, the choice (1.11) makes this error tend to zero and therefore yields
\[
\lim_{m\to\infty}\frac{M_m(a,\rho,\beta)}m=c(\rho,1,a,\beta).
\]
The logistic factor in (1.16) consequently converges to the indicator of $[\sqrt a,B(a,\rho))$, except possibly at the single zero-speed parameter. Dominated convergence then gives the threshold formula. The estimate (1.17) controls the time-discrete approximation of the evolution; it does not provide a quantitative convergence rate for the final threshold approximation. This distinction is discussed in Remark~5.3.

\begin{bluechange}
\subsection{The linear-determinacy threshold in the monostable regime}

We return to system~\eqref{eq:original}, now under the assumptions
\begin{equation}\label{eq:LD-assumptions}
 0<a<1,\qquad d>0,\qquad r>0,\qquad b>0.
\end{equation}
Unlike the standing-front reduction in Subsection~1.1, the monostable
moving-front problem retains the diffusion and growth coefficients $d$ and $r$
separately; its speed does not reduce to a function of $d/r$ alone.
The finite mass $M_m$ defined below is a total mass for a compactly
supported initial function, not the signed excess mass $M_m$ of~(1.14).
We reuse the notation only within the present, disjoint parameter regime.
The linear candidate speed is $c_0=2\sqrt{1-a}$. Write $c^*(a,b,d,r)$
for the minimal propagation speed. Linear determinacy means $c^*=c_0$;
nonlinear determinacy means $c^*>c_0$.
Wu--Xiao--Zhou \cite[Theorem 2.13]{WXXZ} establish a unique threshold
$b_0(a,d,r)\in(0,\infty)$ such that
\begin{equation}\label{eq:LD-threshold}
\begin{aligned}
c^*(a,b,d,r)&=c_0&&\text{if }0<b\le b_0(a,d,r),\\
c^*(a,b,d,r)&>c_0&&\text{if }b>b_0(a,d,r).
\end{aligned}
\end{equation}
The parameter is denoted by $b^*$ in~\cite{WXXZ}; we use $b_0$
to distinguish it from the bistable zero-speed threshold $b_*$.
The formula below is a consequence of this theorem and the established
identification of spreading speed with minimal wave speed. We give the
additional approximation and integration arguments in full.

It is important to distinguish the established theorem from the new
representation. The existence and sharp characterization of
$b_0$ rely essentially on the traveling-wave classification of
Wu--Xiao--Zhou~\cite{WXXZ}, especially their Theorems~2.13 and~2.16.
Their slicing-method analysis identifies the pulled-to-pushed transition,
including the exceptional leading-edge decay of the transition front.
Our Gaussian construction does not independently prove this classification:
it converts its uniquely defined speed threshold into a prescribed
integral--limit expression without using a wave profile as input.


Fix $a,d,r$ as in \eqref{eq:LD-assumptions}. For the parameter $\beta\ge0$, set
\begin{equation}\label{eq:LD-A}
A(\beta)=\frac{1-a}{1-a\min\{\beta,1\}}
=\begin{cases}
 (1-a)/(1-a\beta),&0\le\beta<1,\\
 1,&\beta\ge1.
\end{cases}
\end{equation}
Introduce the elementary compactly supported function and the heat kernel
\begin{equation}\label{eq:LD-seed}
f_a(x)=
\begin{cases}
 \dfrac{1-a}{2}\cos^2\!\left(\dfrac{\pi x}{2}\right),&|x|\le1,\\[3pt]
 0,&|x|>1,
\end{cases}
\qquad
G_{\kappa,h}(x)=\frac{\exp(-x^2/(4\kappa h))}{\sqrt{4\pi\kappa h}}.
\end{equation}
For each integer $m\ge1$, define
\begin{equation}\label{eq:LD-steps}
K_\beta=2+r(1+\beta),\qquad
N_m(\beta)=\ceil{K_\beta m^2e^{m^2}},\qquad
h_m(\beta)=\frac{m}{N_m(\beta)}.
\end{equation}
For fixed $m,\beta$, abbreviate $N=N_m(\beta)$ and $h=h_m(\beta)$.
Starting with $p_0=f_a$ and $q_0=0$, perform $N$ steps:
\begin{equation}\label{eq:LD-recursion}
\begin{aligned}
p_{k+1}&=G_{1,h}*\bigl[p_k+h p_k(1-a-p_k+a q_k)\bigr],\\
q_{k+1}&=G_{d,h}*\bigl[q_k+hr(1-q_k)(\beta p_k-q_k)\bigr],
\qquad 0\le k<N.
\end{aligned}
\end{equation}
Here $(G*f)(x)=\int_\R G(x-y)f(y)\dd y$. The dependence of $p_k,q_k$
on $a,d,r,m,\beta$ is suppressed only to shorten the notation.
Define
\begin{align}
M_m(a,d,r;\beta)&=\int_\R p_{N_m(\beta)}(x)\dd x,
\label{eq:LD-mass}\\
D_m(a,d,r;\beta)&=\left(\frac{M_m(a,d,r;\beta)}{2A(\beta)m}
                      -2\sqrt{1-a}\right)_+,
\qquad s_+=\frac{s+|s|}{2}.
\label{eq:LD-gap}
\end{align}
Each $M_m$ and $D_m$ is completely specified by a finite number of Gaussian
integrations. In particular, no boundary-value problem or unknown traveling
profile occurs in their definition.

\begin{theorem}\label{thm:LD-main}
For every $0<a<1$ and $d,r>0$, the threshold in \eqref{eq:LD-threshold} satisfies
\begin{equation}\label{eq:LD-main}
\displaystyle
b_0(a,d,r)=
-\log\!\left[
 \lim_{n\to\infty}\lim_{m\to\infty}
 \int_0^\infty e^{-\beta}
       \bigl(1-e^{-nD_m(a,d,r;\beta)}\bigr)\dd\beta
\right].
\end{equation}
The inner limit is taken with $n$ fixed. The quantity inside the logarithm
equals $e^{-b_0}$ and belongs to $(0,1)$. For every $b>0$,
\begin{equation}\label{eq:LD-criterion}
\begin{aligned}
\text{linear determinacy}
 &\ \Longleftrightarrow\ \lim_{m\to\infty}D_m(a,d,r;b)=0,\\
\text{nonlinear determinacy}
 &\ \Longleftrightarrow\ \lim_{m\to\infty}D_m(a,d,r;b)>0.
\end{aligned}
\end{equation}
Moreover, the speed itself has the representation
\begin{equation}\label{eq:LD-speed}
c^*(a,b,d,r)=\lim_{m\to\infty}
\frac{M_m(a,d,r;b)}{2A(b)m}.
\end{equation}
\end{theorem}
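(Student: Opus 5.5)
The plan is to prove \eqref{eq:LD-speed} first, derive the criterion \eqref{eq:LD-criterion} from it and the threshold \eqref{eq:LD-threshold}, and then get \eqref{eq:LD-main} from two applications of dominated convergence.

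\textbf{Step 1: mass growth of the exact solution.} Let $(p,q)(t,x)$ solve the exact cooperative system, i.e.\ \eqref{eq:original} with $b=\beta$ written in $(p,q)=(u,1-v)$, with $p(0)=f_a$ and $q(0)=0$. This is $u$ invading the $v$-state $(0,1)$. Behind the front the solution should approach the stable state of the kinetics:
\begin{itemize}
\item for $\beta<1$ the coexistence state $u=(1-a)/(1-a\beta)$, $v=(1-\beta)/(1-a\beta)$, so $p\to A(\beta)$ and $q\to\beta A(\beta)$;
\item for $\beta\ge1$ the state $(1,0)$, so $p\to1=A(\beta)$.
\end{itemize}
I would show $\lim_{t\to\infty}t^{-1}\int_\R p(t,x)\dd x=2A(\beta)\,c^*(a,\beta,d,r)$ in three parts.
\begin{itemize}
\item Outside the front: use the cited identification of the spreading speed with $c^*$. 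For the decay, dominate $p$ by a supersolution: a shifted traveling wave of speed $c^*+\varepsilon$ on each side, or near $c_0$ the linear bound $p\le e^{(1-a)t}G_{1,t}*f_a$. This gives exponentially small mass in $|x|\ge(c^*+\varepsilon)t$.
\item Inside the front: on $|x|\le(c^*-\varepsilon)t$, $p\to A(\beta)$ uniformly. This uses global attractivity of the coexistence state ($a<1$, $\beta<1$, weak competition) or of $(1,0)$ ($\beta\ge1$), combined with the spreading property.
\item Transition layers: $0\le p\le1$, so the two layers contribute $O(\varepsilon t)$.
\end{itemize}
Letting $\varepsilon\to0$ gives the limit. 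I expect this step to be the main obstacle, specifically the $L^1$ tail control uniform in $t$ and the uniform interior convergence. I would first try sub- and supersolutions built from the Wu--Xiao--Zhou minimal fronts \cite{WXXZ}, and Lyapunov or squeezing arguments for the kinetic state.

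\textbf{Step 2: the Gaussian scheme.} With $h\le K_\beta^{-1}$, each reaction step maps $[0,1]^2$ into itself monotonically, because the derivatives of the reaction terms are bounded by $K_\beta$. Heat convolution preserves this and preserves mass. So the iterates stay in $[0,1]$, and $p_k\in L^1$ with $\|p_k\|_{L^1}\le(1+h)^k\|f_a\|_{L^1}$. Repeating the splitting-error argument behind \eqref{eq:intro-error} in this parameter regime gives
\[
\|(p_N,q_N)-(p,q)(m)\|_{L^1}\le C\,m\,h\,e^{\Lambda' m}.
\]
Here $\Lambda'$ depends on $a,\beta,r$, and $q_0=0$ is handled through $q$'s $L^1$ norm together with $q\le\beta p$-type bounds. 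Since $h\sim m^{-1}K_\beta^{-1}e^{-m^2}$, the error tends to $0$, so $M_m=\int p(m)+o(1)$. Dividing by $2A(\beta)m$ and applying Step 1 gives \eqref{eq:LD-speed}.

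\textbf{Step 3: criterion and threshold formula.} From \eqref{eq:LD-speed}, $D_m(\beta)\to(c^*-c_0)_+=c^*-c_0$. By \eqref{eq:LD-threshold} this is $0$ iff $\beta\le b_0$ and positive iff $\beta>b_0$, which is \eqref{eq:LD-criterion}. At $\beta=0$ we have $q\equiv0$ and $c^*=c_0$.

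For the integral formula:
\begin{itemize}
\item Measurability: each $D_m$ is continuous in $\beta$ on the intervals where $N_m(\beta)$ is constant, hence measurable.
\item Limit in $m$: for fixed $n$ the integrand lies in $[0,e^{-\beta}]$. Dominated convergence gives
\[
\int_0^\infty e^{-\beta}\bigl(1-e^{-n(c^*-c_0)}\bigr)\dd\beta .
\]
\item Limit in $n$: the integrand tends to $e^{-\beta}\one_{\{\beta>b_0\}}$. A second dominated convergence gives $\int_{b_0}^\infty e^{-\beta}\dd\beta=e^{-b_0}$.
\end{itemize}
Since $b_0\in(0,\infty)$, this value lies in $(0,1)$. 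Taking $-\log$ yields \eqref{eq:LD-main}.
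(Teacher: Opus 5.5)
Your overall route is the paper's. You prove the mass--speed lemma $t^{-1}\int_\R z_1\dd x\to 2A(\beta)c^*$ from the cited spreading results. You use the $L^1$ splitting estimate with $h\sim K_\beta^{-1}m^{-1}e^{-m^2}$, so that $M_m=\int_\R z_1(m,x)\dd x+o(1)$. Then $D_m\to c^*-c_0$, followed by two dominated-convergence passages. Two points in Step~1 need repair, and the second is a real omission.

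\textbf{The alternative exterior bound is false.} The bound $p\le e^{(1-a)t}G_{1,t}*f_a$ does not hold in general. It would require $F_{\beta,1}(p,q)\le(1-a)p$, that is $p(aq-p)\le0$. At the leading edge $q$ can exceed $p/a$, for instance when $q\approx\beta p$ and $a\beta>1$. If the bound held, it would force $c^*\le c_0$ for every $\beta$, contradicting nonlinear determinacy for $\beta>b_0$.

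What is true is $F_{\beta,1}\le p$, hence $p\le e^{t}G_{1,t}*f_a$. This controls the mass only on $|x|\ge Kt$ with $K>2$. The band $(c^*+\varepsilon)t<|x|<Kt$ must be handled separately. Your traveling-wave supersolutions of speed $c^*+\varepsilon$ do this validly for $\beta\neq1$. Alternatively, as in the paper, $\sup_{|x|\ge ct}z_1\to0$ times a width $O(t)$ gives $o(t)$.

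Similarly, ``$q\le\beta p$'' is not a pointwise bound when $d\neq1$. You do not need it: Gronwall with $F_\beta(0)=0$ and the Lipschitz constant already gives $\norm{z(t)}_1\le B_0e^{Lt}$.

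\textbf{The case $\beta=1$ is missing.} At $\beta=1$ the invading state $(u,v)=(1,0)$ is degenerate: the linearization has eigenvalue $r(1-\beta)=0$ in the resident direction. So ``global attractivity of $(1,0)$ plus the spreading property'' is not covered by the cited spreading theorems, which the paper invokes only for $\beta\neq1$. The criterion and the speed representation are asserted for every $b>0$, so this case cannot be dropped. For the threshold formula alone it would be harmless, since a single $\beta$ is a null set.

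The paper closes the case with a parameter sandwich. The initial data are independent of $\beta$ and $F_\beta$ is componentwise nondecreasing in $\beta$. Hence $z(\cdot;\beta_-)\le z(\cdot;1)\le z(\cdot;\beta_+)$ for $\beta_-<1<\beta_+$. Combine this with continuity of $\beta\mapsto c^*(a,\beta,d,r)$ and $A(\beta_-)\to1$. The lower solution gives interior convergence at $\beta=1$, and the upper solution gives exterior decay. No stability estimate at the degenerate equilibrium is needed. You should add this step.
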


\end{bluechange}

The remainder of the paper is organized as follows. Section~2 recalls the threshold reduction and establishes the relation between excess mass and propagation speed. Section~3 proves order preservation, spatial tail bounds, and the discrete mass identity for the Gaussian recursion. Section~4 derives the quantitative approximation estimate and justifies the simultaneous small-step and long-time limit. Section~5 proves Theorem~1.1 and discusses alternative step counts, the equal-ratio case, and the scope of the error estimate.
\begin{bluechange}
Sections~6--9 develop the monostable construction, establish its $L^1$
approximation and total-mass growth identities, and prove Theorem~\ref{thm:LD-main}.
Section~10 compares the scope of the two representations.
\end{bluechange}

\section{Bistable fronts and excess mass}
We first state explicitly the facts about the zero-speed threshold that are used below.

\begin{proposition}\label{prop:threshold}
For $a>1$ and $\rho>0$, there is a unique $B(a,\rho)\in(\sqrt a,a^2)$ such that, whenever $d/r=\rho$,
\begin{equation}\label{eq:threshold-sign}
\operatorname{sgn}c(d,r,a,b)=\operatorname{sgn}\bigl(b-B(a,\rho)\bigr),\qquad b>1.
\end{equation}
For fixed $a$, the map $\rho\mapsto B(a,\rho)$ is continuous and strictly increasing, with
\begin{equation}\label{eq:threshold-limits}
B(a,1)=a,\qquad \lim_{\rho\downarrow0}B(a,\rho)=\sqrt a,\qquad \lim_{\rho\to\infty}B(a,\rho)=a^2.
\end{equation}
The normalized speed
\begin{equation}\label{eq:cspeed}
c_{a,\rho}(\beta):=c(\rho,1,a,\beta)
\end{equation}
is continuous and strictly increasing in $\beta>1$, and its unique zero is $B(a,\rho)$.
\end{proposition}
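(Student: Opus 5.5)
This proposition only collects background facts, so the plan is to assemble it from the cited literature together with two elementary symmetries: time-rescaling and species exchange.

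\textbf{Step 1: sign structure.} By Kan-on and Nakamura--Ogiwara, for $a,b>1$ the monotone bistable front exists, is unique up to translation, and its speed $c(d,r,a,b)$ is continuous in all coefficients and strictly increasing in $b$. Hence for fixed $(a,d,r)$ the set $\{b>1: c=0\}$ has at most one point. Existence of a zero, lying in $(\sqrt a,a^2)$, comes from the threshold theorem of~\cite{MaXiaoZhou} and the bounds (1.7): the speed is negative for $b\le\sqrt a$ and positive for $b\ge a^2$. The intermediate value theorem and strict monotonicity then give a unique zero $b_*(a,d,r)$ and the sign relation (2.1).

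\textbf{Step 2: dependence on $\rho$ only.} A zero-speed front solves (1.4), which involves $d,r$ only through $\rho=d/r$. Conversely, a monotone solution of (1.4) is a zero-speed front for every $(d,r)$ with $d/r=\rho$. By uniqueness of the zero, $b_*(a,d,r)=B(a,\rho)$ with $B(a,\rho)=b_*(a,\rho,1)$. Taking $(d,r)=(\rho,1)$ gives the statements about $c_{a,\rho}$: continuity and strict monotonicity are inherited from $c$, and the unique zero is $B(a,\rho)$.

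\textbf{Step 3: properties of $\rho\mapsto B(a,\rho)$.}
\begin{itemize}
\item \emph{$B(a,1)=a$.} When $\rho=1$, the identity (1.8) reduces to $(a-b)\bigl[(a-1)+(b-1)\bigr]=0$. Since $a,b>1$, this forces $b=a$. For existence at $b=a$, the system with $\rho=1$ and $a=b$ is invariant under the exchange $(U,V)(\xi)\mapsto (V,U)(-\xi)$, which maps a front of speed $c$ to one of speed $-c$. Uniqueness of the speed then gives $c=0$.
\item \emph{Continuity.} I plan to obtain continuity of $B$ in $\rho$ from joint continuity of $c$ in $(d,b)$, combined with strict monotonicity in $b$, via an implicit-function-type argument for monotone functions.
\item \emph{Strict monotonicity.} I would use the Maxwell identity: at $b=B(a,\rho)$, (1.8) gives $3ab(\rho-1)E=(b-a)\bigl[\rho(a-1)+(b-1)\bigr]$. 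Since $E>0$ and the bracket is positive, $\operatorname{sgn}(b-a)=\operatorname{sgn}(\rho-1)$, i.e.\ $B>a$ exactly when $\rho>1$. This only compares $B$ with $a$. Full strict monotonicity in $\rho$ is instead the rigidity/comparison result of~\cite{MaXiaoZhou}, which I would cite; alternatively one can argue from monotonicity of the speed in $d$ at the standing front.
\item \emph{Limits.} For $\rho\downarrow0$ and $\rho\to\infty$, I would cite the threshold theorem of~\cite{MaXiaoZhou}. A species-exchange symmetry relates the two limits: swapping roles of $u$ and $v$ and rescaling space by $\sqrt\rho$ maps $(a,b,\rho)$ to $(b,a,1/\rho)$. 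Consequently $b=B(a,\rho)$ if and only if $a=B(b,1/\rho)$, so the limit $\sqrt a$ at $0$ corresponds to the limit $a^2$ at $\infty$.
\end{itemize}

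\textbf{Main obstacle.} The delicate part is the endpoint limits in (2.2) together with strict monotonicity in $\rho$. These require the singular-limit analysis and phase-curve rigidity of~\cite{MaXiaoZhou}, so I will invoke that paper rather than reprove them. Everything else follows from uniqueness of the speed, monotonicity in $b$, and the symmetry arguments above.
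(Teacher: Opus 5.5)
Your proposal is correct, but it is organized differently from the paper's proof. The paper obtains every property of $B$ from one cited theorem and one symmetry. Ma--Xiao--Zhou's Theorem~1.3 is stated for the threshold $K_a(\sigma)$ in the \emph{first} competition coefficient when the second coefficient equals $a$: continuous, strictly decreasing, range $(\sqrt a,a^2)$, $K_a(1)=a$, limits $a^2$ at $0$ and $\sqrt a$ at $\infty$. The exchange identity $c(d,r,a,b)=-\sqrt{dr}\,c(1/d,1/r,b,a)$ then gives $B(a,\rho)=K_a(\rho^{-1})$. Hence the range, central value, continuity, strict monotonicity and both limits transfer at once, with $\rho\mapsto\rho^{-1}$ reversing the monotonicity and swapping the limits.

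You instead derive the elementary parts yourself:
\begin{itemize}
\item the sign structure, from strict monotonicity in $b$ and the intermediate value theorem;
\item $B(a,1)=a$ and $\operatorname{sgn}(B-a)=\operatorname{sgn}(\rho-1)$, from the Maxwell identity \eqref{eq:Maxwell};
\item continuity of $B$, from joint continuity of $c$;
\end{itemize}
and you cite only strict monotonicity in $\rho$ and the endpoint limits. Your route shows exactly which assertions need the rigidity analysis of Ma--Xiao--Zhou. The paper's route is shorter and gets everything from one statement.

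Your relation ``$b=B(a,\rho)$ iff $a=B(b,1/\rho)$'' is precisely $K_a(\sigma)=B(a,1/\sigma)$. It is therefore the tool that turns the cited first-coefficient theorem into statements about $B$, and you should use it for that translation, not only to relate the two limits. If you do deduce the $\rho\to\infty$ limit from the $\rho\downarrow0$ limit this way, you also need that $B$ is increasing in its first argument (from $c$ being strictly decreasing in $a$). That controls the moving argument $B(a,\rho)$.

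Finally, drop the suggested alternative via ``monotonicity of the speed in $d$ at the standing front''. A sign-definite dependence of the bistable speed on $d$ is not a classical fact, and such a crossing property is essentially the content of the cited theorem.
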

\begin{proof}
In the notation of~\cite[Theorem~1.3]{MaXiaoZhou}, let $K_a(\sigma)$ be the zero-speed threshold in the first competition coefficient when the second competition coefficient equals $a$ and the diffusion-to-growth ratio is $\sigma$. That theorem states that $K_a$ is continuous and strictly decreasing, with range $(\sqrt a,a^2)$, central value $K_a(1)=a$, and limits $a^2$ at zero and $\sqrt a$ at infinity.

The species-exchange identity is
\begin{equation}\label{eq:exchange}
c(d,r,a,b)=-\sqrt{dr}\,c(1/d,1/r,b,a);
\end{equation}
see~\cite{MaXiaoZhou,NakamuraOgiwara}. The exchanged problem has ratio $\rho^{-1}$. Its speed is positive, zero, or negative according as $b$ is smaller than, equal to, or larger than $K_a(\rho^{-1})$. Therefore
\begin{equation}\label{eq:BKa}
B(a,\rho)=K_a(\rho^{-1}),
\end{equation}
and (2.1)--(2.2) follow. Continuity and strict increase of $c_{a,\rho}$ in $\beta$ are the classical reaction-parameter properties in~\cite{KanOn,NakamuraOgiwara}. The same threshold bounds also follow from~\cite[Corollary~2.9 and Proposition~2.13]{Kenne}.
\end{proof}

Fix $a>1$, $\rho>0$, and $\beta\in I_a$. Write
\[
\bfF_\beta=(F_{1,\beta},F_{2,\beta})^\top,\qquad
\bfD=\operatorname{diag}(1,\rho),\qquad \bfs=(s,s)^\top.
\]
The auxiliary initial-value problem is
\begin{equation}\label{eq:aux}
\bfz_t=\bfD\bfz_{xx}+\bfF_\beta(\bfz),\qquad \bfz(0,x)=\bfs(x),\qquad x\in\R.
\end{equation}
It is obtained from (1.1) with $(d,r,b)=(\rho,1,\beta)$ by setting $(z_1,z_2)=(u,1-v)$. The square $[0,1]^2$ is invariant, and the system is cooperative there because
\[
\partial_q F_{1,\beta}=ap\geq0,\qquad \partial_p F_{2,\beta}=\beta(1-q)\geq0.
\]
Standard parabolic theory and invariance give a unique global bounded classical solution. Comparison with its spatial translates shows that both components are nondecreasing in $x$.

\begin{lemma}\label{lem:transition}
For every $\eta\in(0,1/2)$, there exist $L_\eta,t_\eta>0$ such that the solution of (2.6) satisfies, for $t\geq t_\eta$,
\begin{equation}\label{eq:transition}
\begin{aligned}
z_1(t,x)&\leq\eta&&\text{if }x\leq-c_{a,\rho}(\beta)t-L_\eta,\\
z_1(t,x)&\geq1-\eta&&\text{if }x\geq-c_{a,\rho}(\beta)t+L_\eta.
\end{aligned}
\end{equation}
The constants may depend on $a$, $\rho$, $\beta$, and $\eta$.
\end{lemma}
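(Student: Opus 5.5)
The plan is a Fife--McLeod type squeeze argument: trap the solution of (2.6) between shifted copies of the monotone bistable front, and control the shifts uniformly in time. Write $c=c_{a,\rho}(\beta)$ and let $(U,V)$ be the monotone front of (1.3) with $(d,r,b)=(\rho,1,\beta)$, which exists and is unique up to translation by the classical results recalled before Proposition~2.1. Set $\bfZ(\xi)=(P,Q)(\xi)=(U(\xi),1-V(\xi))$. Then $\bfZ$ solves $\bfD\bfZ''-c\bfZ'+\bfF_\beta(\bfZ)=0$, both components are strictly increasing, $\bfZ(-\infty)=(0,0)$ and $\bfZ(+\infty)=(1,1)$. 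Hence $\bfZ(x+ct)$ is an exact solution of (2.6) whose transition sits at $x=-ct$. It therefore suffices to show that there are constants $\xi^\pm$, $\delta>0$, $\mu>0$ and a vector $\bfw\in(0,\infty)^2$ such that
\[
\bfZ(x+ct-\xi^-)-\delta e^{-\mu t}\bfw\le \bfz(t,x)\le \bfZ(x+ct+\xi^+)+\delta e^{-\mu t}\bfw
\]
componentwise for all $t\ge0$ and $x\in\R$. Given this, the lemma follows at once. We choose $t_\eta$ so that $\delta e^{-\mu t_\eta}w_1\le\eta/2$. We choose $L_\eta$ so that $P(\xi)\le\eta/2$ for $\xi\le -L_\eta+\xi^+$ and $P(\xi)\ge1-\eta/2$ for $\xi\ge L_\eta-\xi^-$; this is possible by the limits of $P$ at $\pm\infty$.

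The first step is the linear algebra at the two stable equilibria. At $(0,0)$ the Jacobian of $\bfF_\beta$ is $\begin{pmatrix}1-a&0\\ \beta&-1\end{pmatrix}$. At $(1,1)$ it is $\begin{pmatrix}-1&a\\ 0&1-\beta\end{pmatrix}$. Both are cooperative (nonnegative off-diagonal entries) and have negative diagonal entries, since $a>1$ and $\beta\ge\sqrt a>1$. Consequently there is a common vector $\bfw>0$ and a constant $\mu_0>0$ such that $J\bfw\le-2\mu_0\bfw$ for both matrices. For example, take $w_1$ small relative to $w_2$ to handle the entry $\beta$ at $(0,0)$, and $w_2$ small relative to $w_1$ to handle the entry $a$ at $(1,1)$. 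These two requirements conflict, so instead use different weights in the two regimes and patch them with a smooth cutoff depending on $\xi$. Alternatively, exploit the fact that both equilibria are stable nodes and take $\bfw$ to depend on $\xi$: set $\bfw(\xi)=\chi(\xi)\bfw^++(1-\chi(\xi))\bfw^-$, where $\chi$ is a smooth increasing cutoff. By continuity, $J(\bfy)\bfw^\pm\le-\mu_0\bfw^\pm$ for all $\bfy$ within some distance $\epsilon_0$ of the corresponding equilibrium.

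The second step, and the main obstacle, is verifying the differential inequality for the supersolution
\[
\bar\bfz(t,x)=\bfZ\bigl(x+ct+\xi^++\sigma(1-e^{-\mu t})\bigr)+\delta e^{-\mu t}\bfw .
\]
The same computation with reversed signs gives the subsolution. I would split the line into three regions. In the two tails, $\bfZ$ lies within $\epsilon_0$ of an equilibrium. There the term $\bfF_\beta(\bfZ+\delta e^{-\mu t}\bfw)-\bfF_\beta(\bfZ)\le-\mu_0\delta e^{-\mu t}\bfw$ dominates $-\mu\delta e^{-\mu t}\bfw$ once $\mu<\mu_0$. The term $\sigma\mu e^{-\mu t}\bfZ'\ge0$ has a favorable sign. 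In the bounded middle region, each component of $\bfZ'$ is bounded below by a positive constant, because both components are strictly monotone. Choosing $\sigma$ large then lets $\sigma\mu\delta^{-1}\bfZ'$ absorb the Lipschitz error $O(\delta e^{-\mu t})$ of $\bfF_\beta$ and the cutoff derivatives of $\bfw$. Since $\bfw$ now depends on $\xi$, the diffusion term generates contributions of order $\delta(|\chi'|+|\chi''|)$; these are also confined to the middle region and are absorbed in the same way. Note that $\bar\bfz$ must remain in a region where the comparison principle for the cooperative system applies. This is handled by allowing values slightly outside $[0,1]^2$, where $\bfF_\beta$ is still polynomial and cooperative for small $\delta$.

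The last step is the initial ordering. The data $\bfs=(s,s)$ converge exponentially to $(0,0)$ as $x\to-\infty$ and to $(1,1)$ as $x\to+\infty$. Fix $\delta$ small and then $\xi^+$ large. For $x\le X_0$ we have $\bfs\le\delta\bfw$, and for $x\ge X_0$ we have $\bfZ(x+\xi^+)\ge\mathbf 1-\delta\bfw\ge\bfs$ up to $\delta$ (in fact $\bfZ(x+\xi^+)+\delta\bfw\ge\mathbf 1\ge\bfs$ after possibly enlarging $\xi^+$). Hence $\bfs\le\bar\bfz(0,\cdot)$, and symmetrically $\underline\bfz(0,\cdot)\le\bfs$. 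The comparison principle for cooperative parabolic systems then gives the sandwich, with the shifts bounded by $\xi^\pm+\sigma$. This proves (2.7).
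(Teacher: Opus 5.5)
Your plan is the same Fife--McLeod squeeze the paper uses. It has a perturbation direction interpolating between a vector adapted to $(0,0)$ and one adapted to $(1,1)$; the paper takes $\bfr_-=(1,\beta+1)^\top$ and $\bfr_+=(a+1,1)^\top$. It also has an exponentially decaying amplitude, a time-dependent shift whose rate lets the front derivative absorb the $O(\delta)$ errors on a compact middle interval, initial ordering from the logistic tails, and comparison. Your opening claim that one vector works for both Jacobians is false: it would require $w_2>\beta w_1$ and $w_1>aw_2$, hence $a\beta<1$. You notice this yourself, so that sentence should simply be dropped.

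The genuine flaw is your justification of comparison. $\bfF_\beta$ is \emph{not} cooperative slightly outside $[0,1]^2$: $\partial_qF_{1,\beta}=ap<0$ when $p<0$, and $\partial_pF_{2,\beta}=\beta(1-q)<0$ when $q>1$. Your supersolution has second component above $1$ in the right tail, and your subsolution has first component below $0$ in the left tail. The usual mean-value linearization between $\bfz$ and $\bar{\bfz}$ then has off-diagonal entry $\beta\bigl(1-(z_2+\bar z_2)/2\bigr)$, which can be negative, so the standard cooperative comparison does not apply as you invoke it.

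The conclusion is still true, but it needs an argument. The paper truncates to $\min\{1,w_i^+\}$ and $\max\{0,w_i^-\}$, so all functions stay in the square. It then checks two things. Where a component is untruncated, the relevant off-diagonal coefficient ($aw_1^\pm$ or $\beta(1-w_2^\pm)$) is evaluated at that untruncated component. On truncated pieces, $F_{1,\beta}(1,q)\le0=F_{2,\beta}(p,1)$ and $F_{1,\beta}(0,q)=0\le F_{2,\beta}(p,0)$. Alternatively, keep your untruncated barriers and use that $F_{1,\beta}$ is affine in $q$ and $F_{2,\beta}$ is affine in $p$. Write $F_{2,\beta}(\bfz)-F_{2,\beta}(\bar{\bfz})=\beta(1-z_2)(z_1-\bar z_1)+[F_{2,\beta}(\bar z_1,z_2)-F_{2,\beta}(\bar z_1,\bar z_2)]$ and $F_{1,\beta}(\bfz)-F_{1,\beta}(\bar{\bfz})=az_1(z_2-\bar z_2)+[F_{1,\beta}(z_1,\bar z_2)-F_{1,\beta}(\bar z_1,\bar z_2)]$. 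The off-diagonal coefficients are then evaluated at the true solution, which lies in $[0,1]^2$, and the Kato--Gronwall comparison goes through; the subsolution is symmetric.

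A smaller bookkeeping point: differentiating $\bfw(\xi(t,x))$ in time produces the term $\delta\sigma\mu e^{-2\mu t}\bfw'$. This term need not have a favorable sign in both barriers and grows with $\sigma$, so taking $\sigma$ large does not absorb it. You need $\delta\norm{\bfw'}_\infty$ small compared with $\min_i Z_i'$ on the middle interval. This is harmless because $\delta$ is fixed before $\sigma$. In the paper, whose shift rate is $\kappa\delta(t)$, this term is part of the $O(\delta^2)$ remainder.
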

\begin{proof}
We record the perturbed-front comparison construction used in bistable front stability; see~\cite{KanOnFang,RoquejoffreTermanVolpert}, \cite[Lemma~A2]{OgiwaraMatano}, and~\cite[proof of Corollary~2.2]{NakamuraOgiwara}. Fix the parameters and write $c=c_{a,\rho}(\beta)$. Let $\bfW=(U,1-V)^\top$ be the monotone front for these parameters. Thus
\[
\begin{gathered}
\bfD\bfW''-c\bfW'+\bfF_\beta(\bfW)=0,\\
\bfW(-\infty)=\zero,\qquad \bfW(+\infty)=\one,\\
W_i'>0\qquad(i=1,2),
\end{gathered}
\]
where $\zero=(0,0)^\top$ and $\one=(1,1)^\top$.

The reaction Jacobians at the two end states and suitable positive perturbation directions are
\[
D\bfF_\beta(\zero)=\begin{pmatrix}1-a&0\\\beta&-1\end{pmatrix},
\qquad \bfr_-=(1,\beta+1)^\top,
\]
\[
D\bfF_\beta(\one)=\begin{pmatrix}-1&a\\0&1-\beta\end{pmatrix},
\qquad \bfr_+=(a+1,1)^\top.
\]
In particular,
\[
D\bfF_\beta(\zero)\bfr_-=(1-a,-1)^\top<0,\qquad
D\bfF_\beta(\one)\bfr_+=(-1,1-\beta)^\top<0.
\]
Choose $\nu>0$ such that
\[
2\nu<\min\left\{a-1,\frac1{\beta+1},\frac1{a+1},\beta-1\right\}.
\]
By the limits of $\bfW$ and continuity of $D\bfF_\beta$, there is $R_0>0$ such that
\[
-D\bfF_\beta(\bfW(\xi))\bfr_- -\nu\bfr_-\geq\nu\bfr_-\quad(\xi\leq-R_0),
\]
\[
-D\bfF_\beta(\bfW(\xi))\bfr_+ -\nu\bfr_+\geq\nu\bfr_+\quad(\xi\geq R_0).
\]
All vector inequalities in this proof are componentwise. Take a positive $C^2$ function $\bfr:\R\to\R^2$ that equals $\bfr_-$ on $(-\infty,-R_0]$ and $\bfr_+$ on $[R_0,\infty)$; for example, use a smooth convex interpolation. Set
\[
r_{\min}=\inf_{\xi\in\R,\,i=1,2}r_i(\xi)>0,\qquad
r_{\max}=\sup_{\xi\in\R,\,i=1,2}r_i(\xi)<\infty.
\]
The derivatives $\bfr'$, $\bfr''$ have compact support. On $[-R_0,R_0]$, each $W_i'$ has a strictly positive minimum. Consequently, we can choose $\kappa>0$ so large that, for some $\gamma>0$,
\[
\begin{aligned}
\mathbf b(\xi):={}&\kappa\bfW'(\xi)-\nu\bfr(\xi)+c\bfr'(\xi)-\bfD\bfr''(\xi)\\
&-D\bfF_\beta(\bfW(\xi))\bfr(\xi)\geq\gamma\one\qquad(\xi\in\R).
\end{aligned}
\]
Indeed, the tail inequalities above give this bound outside the compact interval, while the term $\kappa\bfW'$ dominates its bounded remaining terms on that interval.

For a positive constant $\delta_0$ to be chosen, define
\[
\delta(t)=\delta_0e^{-\nu t},\qquad
\zeta_\pm(t)=\zeta_\pm^0\pm\frac{\kappa\delta_0}{\nu}(1-e^{-\nu t}),
\]
\[
\xi_\pm(t,x)=x+ct+\zeta_\pm(t),\qquad
\bfw^\pm(t,x)=\bfW(\xi_\pm(t,x))\pm\delta(t)\bfr(\xi_\pm(t,x)).
\]
Writing $\mathcal P(\bfw)=\bfw_t-\bfD\bfw_{xx}-\bfF_\beta(\bfw)$, direct differentiation gives
\[
\mathcal P(\bfw^\pm)=\pm\delta(t)\mathbf b(\xi_\pm)+\mathbf E_\pm(t,x),\qquad
\sup_x\norm{\mathbf E_\pm(t,x)}_\infty\leq C\delta(t)^2.
\]
Here $C\geq1$ is independent of $t$, $x$ and of the initial shifts $\zeta_\pm^0$. More explicitly, the remainder is
\[
\begin{aligned}
\mathbf E_\pm={}&\kappa\delta(t)^2\bfr'(\xi_\pm)\\
&-\bigl[\bfF_\beta(\bfW(\xi_\pm)\pm\delta(t)\bfr(\xi_\pm))
-\bfF_\beta(\bfW(\xi_\pm))\\
&\qquad\mp\delta(t)D\bfF_\beta(\bfW(\xi_\pm))\bfr(\xi_\pm)\bigr].
\end{aligned}
\]
The bound follows because $\bfF_\beta$ is quadratic and $\bfr$ and $\bfr'$ are bounded. Choose
\[
0<\delta_0<\min\left\{1,\frac\gamma{2C},\frac1{2r_{\max}}\right\}.
\]
It follows that $\mathcal P(\bfw^+)\geq\frac12\gamma\delta(t)\one$ and $\mathcal P(\bfw^-)\leq-\frac12\gamma\delta(t)\one$.

To keep the comparison inside the invariant square, truncate componentwise:
\[
\begin{aligned}
\overline w_i(t,x)&=\min\{1,w_i^+(t,x)\},\\
\underline w_i(t,x)&=\max\{0,w_i^-(t,x)\}\qquad(i=1,2).
\end{aligned}
\]
These are bounded generalized super- and subsolutions, respectively. To verify this, on any region where the $i$th component is not truncated, the explicit formulas for the reaction give
\[
F_{i,\beta}(\overline{\bfw})\leq F_{i,\beta}(\bfw^+),\qquad
F_{i,\beta}(\underline{\bfw})\geq F_{i,\beta}(\bfw^-).
\]
For $i=1$, this uses $aw_1^\pm\geq0$ on that region; for $i=2$, it uses $\beta(1-w_2^\pm)\geq0$ there. Thus the inequalities hold even if the other untruncated component lies outside $[0,1]$. On a region where a component is truncated to its boundary value, the required residual sign follows from
\[
\begin{aligned}
F_{1,\beta}(0,q)&=0,\qquad F_{1,\beta}(1,q)=a(q-1)\leq0,\\
F_{2,\beta}(p,0)&=\beta p\geq0,\qquad F_{2,\beta}(p,1)=0.
\end{aligned}
\]
Here $0\leq p,q\leq1$. The interface inequalities follow from the scalar truncation identities: for a smooth function $f$ and $\mathcal L_i=\partial_t-D_i\partial_{xx}$, where $D_1=1$ and $D_2=\rho$,
\[
\mathcal L_i\min\{1,f\}\geq\mathbf1_{\{f<1\}}\mathcal L_i f,\qquad
\mathcal L_i\max\{0,f\}\leq\mathbf1_{\{f>0\}}\mathcal L_i f
\]
in the sense of distributions. These follow by smooth concave and convex approximations of the two truncations, respectively, and include the favorable signs of their spatial derivative jumps. Together with the reaction inequalities above, they establish the generalized super- and subsolution inequalities. Both truncated functions take values in $[0,1]^2$, so the comparison principle uses only cooperativity on that square.

\begin{bluechange}
For clarity, these distributional assertions and the comparison on the whole line can be justified directly. Let $\phi_\varepsilon$ be a smooth concave approximation of $r\mapsto\min\{1,r\}$, with $0\leq\phi_\varepsilon'\leq1$. For a smooth $f$,
\[
\mathcal L_i\phi_\varepsilon(f)
=\phi_\varepsilon'(f)\mathcal L_i f-D_i\phi_\varepsilon''(f)|f_x|^2
\geq\phi_\varepsilon'(f)\mathcal L_i f.
\]
Passing to the limit locally in distributions gives the first truncation inequality. Smooth convex approximations of $r\mapsto\max\{0,r\}$ give the second. On the set $w_i^+<1$, the $i$th component of $\overline{\bfw}$ agrees with $w_i^+$, and the off-diagonal reaction derivative is nonnegative there; on the set $w_i^+\geq1$, the constant component $\overline w_i=1$ has $F_{i,\beta}(\overline{\bfw})\leq0$. Hence $\mathcal P_i(\overline{\bfw})\geq0$ in distributions. Similarly, on $w_i^->0$ the off-diagonal derivative is nonnegative, while on $w_i^-\leq0$ one has $F_{i,\beta}(\underline{\bfw})\geq0$. Thus $\mathcal P_i(\underline{\bfw})\leq0$ in distributions.

For completeness, we give the whole-line comparison argument, including
its treatment of the nonsmooth truncation. Let $v$ be a bounded
$[0,1]^2$-valued distributional subsolution and let $z$ be a bounded
$[0,1]^2$-valued classical solution, with $v(0,\cdot)\leq z(0,\cdot)$.
Put $w=v-z$. By the mean-value formula,
\[
F_{i,\beta}(v)-F_{i,\beta}(z)=\sum_{j=1}^2a_{ij}(t,x)w_j,
\qquad
a_{ij}=\int_0^1\partial_jF_{i,\beta}(z+\theta(v-z))\,\mathrm d\theta.
\]
The coefficients $a_{ij}$ are bounded, and $a_{ij}\geq0$ for $i\ne j$.
Writing $w_i^+=\max\{w_i,0\}$, the distributional Kato inequality
for $\partial_t-D_i\partial_{xx}$ gives
\[
(\partial_t-D_i\partial_{xx})w_i^+
\leq C_0(w_1^++w_2^+)\qquad (i=1,2),
\]
where $C_0$ depends only on the Lipschitz bound of $\bfF_\beta$ on the
square. To see this directly, apply smooth convex approximations of the
positive-part function to $w_i$ on $[\varepsilon,T]$, use the favorable sign
of their second derivatives, and then pass to the limit. The boundedness of
$w$ permits comparison with the positive scalar heat semigroups on $\R$.
With $E(t)=\max_i\sup_x w_i^+(t,x)$, their Gaussian integral formulas yield
\[
E(t)\leq 2C_0\int_0^t E(s)\,\mathrm ds.
\]
Here the initial term vanishes by the assumed initial ordering; the same
bound follows first on $[\varepsilon,T]$ and then as $\varepsilon\downarrow0$.
Gronwall's inequality gives $v\leq z$ throughout $[0,T]\times\R$.
The corresponding argument with $z-v$ gives comparison with bounded
supersolutions. Applying these two facts to the truncated functions above
propagates the stated ordering for all $t\geq0$.
\end{bluechange}

It remains to choose the initial shifts. Put $\mu=\delta_0r_{\min}\in(0,1/2)$ and $X_0=\log((1-\mu)/\mu)>0$. The prescribed logistic initial function satisfies $s(x)\leq\mu$ for $x\leq-X_0$ and $s(x)\geq1-\mu$ for $x\geq X_0$. Choose $\zeta_+^0$ large enough and $\zeta_-^0$ small enough that
\[
W_i(-X_0+\zeta_+^0)\geq1-\mu,\qquad
W_i(X_0+\zeta_-^0)\leq\mu\qquad(i=1,2).
\]
For $x\leq-X_0$, $w_i^+(0,x)\geq\mu\geq s(x)$; for $x\geq-X_0$, monotonicity of $W_i$ gives $w_i^+(0,x)\geq1$. Similarly, $w_i^-(0,x)\leq0$ for $x\leq X_0$, whereas $w_i^-(0,x)\leq1-\mu\leq s(x)$ for $x\geq X_0$. Hence
\[
\begin{aligned}
\underline{\bfw}(0,x)&\leq\bfs(x)\leq\overline{\bfw}(0,x),\\
\underline{\bfw}(t,x)&\leq\bfz(t,x)\leq\overline{\bfw}(t,x)\quad(t\geq0,\ x\in\R)
\end{aligned}
\]
by comparison.

Finally, define
\[
C_{\mathrm{sh}}=\max\{|\zeta_-^0|,|\zeta_+^0|\}+\frac{\kappa\delta_0}{\nu}.
\]
Then $|\zeta_\pm(t)|\leq C_{\mathrm{sh}}$ for all $t\geq0$. Given $\eta\in(0,1/2)$, choose $R_\eta>0$ such that $W_1(\xi)\leq\eta/2$ for $\xi\leq-R_\eta$ and $W_1(\xi)\geq1-\eta/2$ for $\xi\geq R_\eta$, and set
\[
L_\eta=R_\eta+C_{\mathrm{sh}}+1,\qquad
t_\eta=1+\frac1\nu\max\left\{0,\log\frac{2\delta_0r_{\max}}\eta\right\}.
\]
For $t\geq t_\eta$, the perturbation in either component is at most $\eta/2$. If $x\leq-ct-L_\eta$, then $\xi_+(t,x)\leq-R_\eta$ and the upper comparison gives $z_1(t,x)\leq\eta$. If $x\geq-ct+L_\eta$, then $\xi_-(t,x)\geq R_\eta$ and the lower comparison gives $z_1(t,x)\geq1-\eta$. This proves (2.7).
\end{proof}

We next establish spatial tail bounds and relate the excess mass to the propagation speed.
\begin{lemma}\label{lem:tails}
Set
\begin{equation}\label{eq:omega}
D_0=\max\{1,\rho\},\qquad\omega=D_0+a^2.
\end{equation}
For $\beta\in I_a$, $i=1,2$, $t\geq0$, and $x\in\R$, the solution of (2.6) satisfies
\begin{equation}\label{eq:tails}
\begin{aligned}
0\leq z_i(t,x)&\leq\min\{1,e^{x+\omega t}\},\\
0\leq 1-z_i(t,x)&\leq\min\{1,e^{-x+\omega t}\}.
\end{aligned}
\end{equation}
In particular, the excess mass
\begin{equation}\label{eq:J}
J(t;a,\rho,\beta)=\int_{\R}[z_1(t,x)-s(x)]\dd x
\end{equation}
is absolutely convergent for every finite $t\geq0$.
\end{lemma}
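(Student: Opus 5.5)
The plan is to compare the solution of (2.6) with explicit spatially exponential super- and subsolutions whose two components are equal, truncated at $1$ and $0$ respectively. The comparison machinery is the one already set up in the proof of Lemma~2.2: the distributional truncation inequalities for $\min\{1,\cdot\}$ and $\max\{0,\cdot\}$, together with the whole-line Kato/Gronwall comparison for bounded $[0,1]^2$-valued sub- and supersolutions of the cooperative system. The bounds $0\le z_i\le1$ are already given by invariance of the square.

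\emph{Upper tail.} Let $e(t,x)=e^{x+\omega t}$ and $\overline{\bfw}=(\min\{1,e\},\min\{1,e\})^\top$.

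1. Where $e<1$, we have $(\partial_t-D_i\partial_{xx})e=(\omega-D_i)e\ge a^2e$, because $\omega-D_i\ge a^2$.
2. On the diagonal, $F_{1,\beta}(e,e)=(1-a)e(1-e)\le0$ and $F_{2,\beta}(e,e)=(\beta-1)e(1-e)\le(\beta-1)e\le a^2e$. Here we use $\beta\le a^2$, which holds since $\beta\in I_a$; this is the only place the restriction on $\beta$ enters.
3. Hence each untruncated component satisfies $\mathcal P_i\ge0$.
4. Where a component is truncated to $1$, we have $F_{1,\beta}(1,q)\le0$ and $F_{2,\beta}(p,1)=0$. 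The truncation inequality then gives a generalized supersolution, exactly as in Lemma~2.2.
5. Initially, $s(x)=1/(1+e^{-x})\le\min\{1,e^x\}$.

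Comparison therefore yields $z_i\le\min\{1,e^{x+\omega t}\}$.

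\emph{Lower tail.} Let $\tilde e=e^{-x+\omega t}$ and $\underline{\bfw}=(\max\{0,1-\tilde e\},\max\{0,1-\tilde e\})^\top$. With $v=1-\tilde e$:

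1. Where $v>0$, we have $(\partial_t-D_i\partial_{xx})v=-(\omega-D_i)\tilde e\le-a^2\tilde e$.
2. On the diagonal, $F_{1,\beta}(v,v)=-(a-1)v\tilde e\ge-(a-1)\tilde e\ge-a^2\tilde e$ and $F_{2,\beta}(v,v)=(\beta-1)v\tilde e\ge0$.
3. Hence $\mathcal P_i(\underline{\bfw})\le0$ where untruncated.
4. On the truncated set, $F_{1,\beta}(0,q)=0$ and $F_{2,\beta}(p,0)=\beta p\ge0$.
5. Initially, $1-s(x)=1/(1+e^{x})\le\min\{1,e^{-x}\}$.

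Comparison gives $1-z_i\le\min\{1,e^{-x+\omega t}\}$.

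\emph{Main obstacle.} The only delicate point is that the comparison argument of Lemma~2.2 compares a \emph{bounded} truncated super/subsolution with the solution on the whole line. The untruncated exponentials are unbounded, so the truncation must be applied before invoking comparison. The reaction sign checks on the truncated sets must also be done with the other component arbitrary in $[0,1]$. Both issues are covered by items 4 above together with the truncation inequalities already proved, since the off-diagonal derivatives $ap$ and $\beta(1-q)$ are nonnegative on the relevant regions.

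\emph{Integrability of $J$.} For $x\le0$, the bounds give
\[
|z_1(t,x)-s(x)|\le\min\{1,e^{x+\omega t}\}+e^{x}.
\]
For $x\ge0$, writing $z_1-s=(1-s)-(1-z_1)$, they give
\[
|z_1(t,x)-s(x)|\le e^{-x}+\min\{1,e^{-x+\omega t}\}.
\]
Both right-hand sides are integrable over their half-lines for each fixed $t$. Hence the integral in (2.10) converges absolutely.
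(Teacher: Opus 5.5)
Your proof is correct, but it takes a different route from the paper. The paper does not compare with a nonlinear barrier here. It majorizes the reaction on the square by linear cooperative systems, $\bfF_\beta(\bfz)\le B_-\bfz$ for the left tail and $-\bfF_\beta(\one-\bfy)\le B_+\bfy$ for the right tail. It then observes that $e^{\pm x+\omega t}\one$ is a supersolution of the corresponding linear system, because $(\bfD+B_\mp)\one\le\omega\one$. To handle the unbounded envelope, it runs the positive linear semigroup from the bounded data $\min\{M,e^{\pm x}\}\one$ and lets $M\to\infty$ by monotone convergence.

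You instead stay with the nonlinear system on the diagonal $p=q$, where the reaction factorizes as $\bigl((1-a)p(1-p),(\beta-1)p(1-p)\bigr)$, and you truncate at the equilibria $1$ and $0$. This keeps both barriers bounded and $[0,1]^2$-valued. The truncation inequalities and the whole-line comparison already proved in Lemma~2.2 then apply directly, so you need no linear semigroup, no unbounded datum, and no limit in $M$. I checked your sign computations on both the untruncated and truncated sets, and your initial orderings; all of them hold.

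What the paper's version buys is that it uses only the crude one-sided bounds $F_{1,\beta}\le p$ and $F_{2,\beta}\le\beta p$, together with their right-tail analogues. These are exactly the per-step envelope bounds reused for the Gaussian iterates in Lemma~3.1, with the same rate $\omega$. One small inaccuracy in your write-up: $\beta\le a^2$ is not the only place $\beta\in I_a$ enters, since your lower-tail claim $F_{2,\beta}(v,v)\ge0$ uses $\beta\ge1$. This is harmless, because $\beta\ge\sqrt a>1$ on $I_a$.
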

\begin{proof}
On the invariant square,
\[
F_{1,\beta}(p,q)\leq p,\qquad F_{2,\beta}(p,q)\leq\beta p.
\]
Equivalently, $\bfF_\beta(\bfz)\leq B_-\bfz$ componentwise, where
\[
B_-=\begin{pmatrix}1&0\\\beta&0\end{pmatrix}.
\]
Both diffusion coefficients are at most $D_0$, and both row sums of $B_-$ are at most $a^2$. Thus $\mathbf E_-(t,x)=e^{x+\omega t}\one$ satisfies
\[
\partial_t\mathbf E_- -\bfD\partial_{xx}\mathbf E_- -B_-\mathbf E_-
=e^{x+\omega t}\begin{pmatrix}\omega-2\\\omega-\rho-\beta\end{pmatrix}\geq0.
\]
Since $\bfs(x)\leq e^x\one$, comparison with this linear supersolution, followed by invariance, proves the first bound. The comparison on $\R$ can equivalently be obtained from the positive linear heat semigroup and its variation-of-constants formula; the Gaussian convolutions of the exponential envelope are finite.

\begin{bluechange}
To justify this comparison despite the unbounded exponential envelope, fix $M\geq1$ and put $g_M(x)=\min\{M,e^x\}$. The bounded function $g_M\one$ dominates $\bfs$. Let $T_-(t)$ be the positive semigroup on bounded functions generated by $\bfD\partial_{xx}+B_-$. Linear comparison gives
\[
\bfz(t,\cdot)\leq T_-(t)(g_M\one).
\]
As $M\to\infty$, positivity and monotone convergence in the Gaussian variation-of-constants series give, for every fixed $t$ and $x$,
\[
\bfz(t,x)\leq T_-(t)(e^x\one)
=e^x\exp\bigl(t(\bfD+B_-)\bigr)\one
\leq e^{x+\omega t}\one.
\]
The last inequality follows from $(\bfD+B_-)\one\leq\omega\one$ and positivity of the finite-dimensional matrix semigroup. In particular, no comparison principle with an unbounded initial datum is being assumed.
\end{bluechange}

For $\bfy=(1-z_1,1-z_2)$, direct calculation yields
\[
-F_{1,\beta}(1-y_1,1-y_2)\leq ay_2,\qquad
-F_{2,\beta}(1-y_1,1-y_2)\leq y_2
\]
when $\bfy\in[0,1]^2$. These inequalities read $-\bfF_\beta(\one-\bfy)\leq B_+\bfy$, where $\one=(1,1)^\top$ and
\[
B_+=\begin{pmatrix}0&a\\0&1\end{pmatrix}.
\]
Its row sums are again at most $a^2$. Therefore $\mathbf E_+(t,x)=e^{-x+\omega t}\one$ is a supersolution of the corresponding linear system. The initial inequality $\one-\bfs(x)\leq e^{-x}\one$ and linear comparison prove the second bound.

\begin{bluechange}
The same bounded-data argument applies to this right-tail estimate: use $\min\{M,e^{-x}\}\one$ as initial datum for the positive semigroup generated by $\bfD\partial_{xx}+B_+$, pass to the monotone limit, and use $(\bfD+B_+)\one\leq\omega\one$.
\end{bluechange}

For $x<0$, we have
\[
|z_1(t,x)-s(x)|\leq(e^{\omega t}+1)e^x,
\]
and for $x>0$ the analogous bound is $(e^{\omega t}+1)e^{-x}$. This proves absolute convergence in (2.10).
\end{proof}

\begin{lemma}\label{lem:mass-speed}
For every $\beta\in I_a$,
\begin{equation}\label{eq:mass-speed}
\lim_{t\to\infty}\frac{J(t;a,\rho,\beta)}t=c_{a,\rho}(\beta).
\end{equation}
\end{lemma}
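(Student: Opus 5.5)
The plan is to compare $z_1(t,\cdot)$ with the step function $H_t=\mathbf 1_{[-ct,\infty)}$, where $c=c_{a,\rho}(\beta)$. We write
\[
J(t)=\int_{\R}[z_1(t,x)-H_t(x)]\dd x+\int_{\R}[H_t(x)-\mathbf 1_{[0,\infty)}(x)]\dd x+\int_{\R}[\mathbf 1_{[0,\infty)}(x)-s(x)]\dd x .
\]
The last integral is $0$, since $s(x)-1/2$ is odd. The middle integral equals $ct$ exactly, for either sign of $c$. It therefore suffices to show that
\[
R(t)=\int_{\R}|z_1(t,x)-H_t(x)|\dd x=o(t).
\]

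We split $\R$ into three regions: the left region $x\le -ct-L$, the transition zone $|x+ct|<L$, and the right region $x\ge -ct+L$, with $L=L_\eta$ from Lemma~\ref{lem:transition}. The transition zone contributes at most $2L_\eta$, which is bounded in $t$. In the outer regions Lemma~\ref{lem:transition} only gives pointwise smallness ($z_1\le\eta$ or $1-z_1\le\eta$), and this is not integrable over unbounded sets. So the outer regions need the decay.

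The main obstacle is obtaining exponential decay of the tails beyond the front, uniformly in $t$. We would not use the crude bounds of Lemma~\ref{lem:tails}, whose growth rate $\omega$ is too large. Instead we reuse the supersolution $\overline{\bfw}$ from the proof of Lemma~\ref{lem:transition}. It gives
\[
z_1(t,x)\le W_1(\xi_+)+\delta(t)r_1(\xi_+),\qquad 1-z_1(t,x)\le 1-W_1(\xi_-)+\delta(t)r_1(\xi_-),
\]
with $|\zeta_\pm|\le C_{\mathrm{sh}}$. The profile terms decay exponentially in $|\xi|$, because $\bfW$ converges exponentially to its hyperbolic end states. The perturbation term $\delta(t)\bfr$, however, is constant in $x$ and hence not integrable. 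We would handle it in two steps.

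First, on the left region we construct a separate linear supersolution near $\zero$. The linearization there is $D\bfF_\beta(\zero)$, which has negative eigenvalues $1-a$ and $-1$. Once $z\le\eta\one$ for $x\le -ct-L_\eta$ and $t\ge t_\eta$, we compare on the moving half-line with a function of the form $\epsilon\,e^{\lambda(x+ct+L_\eta)}\bfr_-$. The boundary value on $x=-ct-L_\eta$ is controlled by $\eta$. The initial value at time $t_\eta$ is controlled by Lemma~\ref{lem:tails}, which gives $z_1\le e^{x+\omega t_\eta}$, so a large multiplicative constant suffices. Here $\lambda>0$ is chosen small so that
\[
\lambda^2 D_0+|c|\lambda+\text{(negative linear part)}<0
\]
after adding the nonlinear error $O(\eta)$. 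This gives $z_1\le C e^{\lambda(x+ct)}$ on the left region, uniformly for $t\ge t_\eta$. The same argument near $\one$, using $D\bfF_\beta(\one)$ (eigenvalues $-1$ and $1-\beta$) and the direction $\bfr_+$, gives $1-z_1\le Ce^{-\lambda(x+ct)}$ on the right region.

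With these bounds the outer regions contribute $O(1)$, so $R(t)=O(1)$ for $t\ge t_\eta$. Hence $J(t)=ct+O(1)$, which proves the lemma. If the half-line comparison with time-dependent boundary proves awkward, we have a fallback: split at $|x+ct|=\epsilon t$ instead. On the middle part we use the pointwise $\eta$-bounds, paying at most $2\eta\epsilon t$. On the far part we use the $\omega$-tail bounds of Lemma~\ref{lem:tails}. Those bounds only control $|x|\gtrsim\omega t$, however, so the fallback still needs the exponential comparison. We therefore expect that comparison step to be the essential one.
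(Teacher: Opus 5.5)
Your proof is correct, but it takes a harder route than the paper, and the reasoning that pushed you there is mistaken.

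\textbf{What the paper does.} It never needs decay of $z_1-H_t$ at distance $O(1)$ from the front. It fixes $K>\max\{|c|,\omega\}+1$, chosen independently of $\eta$. On the window $[-Kt,Kt]$ it uses only the pointwise bounds of Lemma~\ref{lem:transition}. The outer regions inside this window have length $O(t)$, so pointwise smallness costs at most $2Kt\eta+2L_\eta$ there. For $|x|\ge Kt$ it uses the crude bounds of Lemma~\ref{lem:tails}, which give $2e^{-(K-\omega)t}$ precisely because $K>\omega$. Dividing by $t$ yields $\limsup_{t\to\infty}|J/t-c|\le 2K\eta$, and letting $\eta\downarrow0$ finishes.

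\textbf{Where your diagnosis goes wrong.} Your fallback works once the middle window is taken to be all of $|x|\le Kt$ rather than $|x+ct|\le\epsilon t$. The claim that it ``still needs the exponential comparison'' is false, because an $O(\eta t)$ error is acceptable when only $o(t)$ is required. Likewise, the bounds of Lemma~\ref{lem:tails} are not ``too large''. They are exactly what controls the region beyond $Kt$.

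\textbf{Your main route.} The linear supersolution $Ce^{\lambda(x+ct+L_\eta)}\bfr_-$ on the moving half-line, together with its analogue near $\one$, is sound. It requires the two-component version of Lemma~\ref{lem:transition}, which the proof of that lemma supplies. You need it because the cross terms $az_1z_2$ near $\zero$ and $\beta(1-z_1)(1-z_2)$ near $\one$ demand smallness of both components.

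\textbf{Trade-off.} Your route buys more, namely $J(t)=ct+O(1)$, an $O(1/t)$ rate for each fixed $\beta$. The cost is an additional comparison on a noncylindrical domain with $\beta$-dependent constants. The paper's argument uses only Lemmas~\ref{lem:transition} and~\ref{lem:tails} as stated and delivers exactly the $o(t)$ that is needed.
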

\begin{proof}
Write $c=c_{a,\rho}(\beta)$ and let $H(x)=\mathbf1_{\{x>0\}}$. The identity $s(-x)=1-s(x)$ gives
\[
\int_{\R}(H-s)\dd x=0.
\]
Translation of the step function therefore yields
\begin{equation}\label{eq:stepmass}
\int_{\R}[H(x+ct)-s(x)]\dd x=ct.
\end{equation}
Choose $K>\max\{|c|,\omega\}+1$ and fix $\eta\in(0,1/2)$. For sufficiently large $t$, Lemma~2.2 gives
\[
\int_{-Kt}^{Kt}|z_1(t,x)-H(x+ct)|\dd x\leq2Kt\eta+2L_\eta.
\]
Outside $[-Kt,Kt]$, the step function is respectively zero and one. Lemma~2.3 thus bounds the sum of the two remaining tail integrals by $2e^{-(K-\omega)t}$. Subtracting (2.12) from (2.10) and dividing by $t$, we obtain
\[
\limsup_{t\to\infty}\left|\frac{J(t;a,\rho,\beta)}t-c\right|\leq2K\eta.
\]
Since $K$ is independent of $\eta$, letting $\eta\downarrow0$ proves (2.11).
\end{proof}

\section{Order-preserving Gaussian approximation}
For an arbitrary step $h>0$, define the diagonal heat semigroup and the explicit heat--reaction approximation by
\begin{equation}\label{eq:operators}
\begin{aligned}
\bfS_h\bf f&=(G_{1,h}*f_1,G_{\rho,h}*f_2)^\top,\\
\bfR_{h,\beta}(p,q)&=(p,q)^\top+h\bfF_\beta(p,q),\\
\bfQ_{h,\beta}\bf f&=\bfS_h\bigl(\bf f+h\bfF_\beta(\bf f)\bigr).
\end{aligned}
\end{equation}
Starting from $\bfZ_0=\bfs$, set
\[
\bfZ_{k+1}=\bfQ_{h,\beta}\bfZ_k,\qquad \bfZ_k=(p_k,q_k)^\top.
\]
With $h=h_m$ and $N=N_m$, this is exactly (1.12).

\begin{lemma}\label{lem:discrete}
Assume
\begin{equation}\label{eq:stepsize}
0<h\leq\frac1{1+a^2},\qquad \beta\in I_a.
\end{equation}
Then $\bfR_{h,\beta}$ preserves $[0,1]^2$ and is order preserving there. The Gaussian iterates $\bfZ_k$ take values in $[0,1]^2$, are componentwise nondecreasing in $x$, and satisfy
\begin{equation}\label{eq:discrete-tails}
\begin{aligned}
p_k(x),q_k(x)&\leq e^{x+\omega kh},\\
1-p_k(x),1-q_k(x)&\leq e^{-x+\omega kh}.
\end{aligned}
\end{equation}
For each finite $k$, both $\bfZ_k-\bfs$ and $\bfF_\beta(\bfZ_k)$ belong to $L^1(\R;\R^2)$.
\end{lemma}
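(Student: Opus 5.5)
The plan is to prove the properties of the reaction map first, then carry each one through a single Gaussian step by induction on $k$, using that convolution with a probability kernel preserves order, preserves bounds, and commutes with translations.

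\emph{Invariance and monotonicity of $\bfR_{h,\beta}$.} Write $R_1(p,q)=p+hp(1-a-p+aq)$ and $R_2(p,q)=q+h(1-q)(\beta p-q)$ on $[0,1]^2$. Then
\[
\partial_pR_1=1+h(1-a-2p+aq),\qquad \partial_qR_1=ahp\ge0,
\]
\[
\partial_pR_2=h\beta(1-q)\ge0,\qquad \partial_qR_2=1-h(\beta p+1-2q).
\]
Since $1-a-2p+aq\ge -1-a$ and $\beta p+1-2q\le \beta+1\le a^2+1$ for $\beta\le a^2$, the bound $h\le(1+a^2)^{-1}$ makes both diagonal derivatives nonnegative. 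Here $1-h(1+a)\ge0$ also follows, because $a\le a^2$. Hence $\bfR_{h,\beta}$ is componentwise nondecreasing on the convex square. For invariance, I use monotonicity and compare with the corners: $\bfR(\zero)=\zero$ and $\bfR(\one)=\one$, so $\zero\le\bfR(p,q)\le\one$ whenever $(p,q)\in[0,1]^2$.

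\emph{Values and monotonicity in $x$ for the iterates.} Since $\bfS_h$ is convolution with nonnegative unit-mass kernels, it maps $[0,1]$-valued functions to $[0,1]$-valued functions and preserves order. Therefore $\bfQ_{h,\beta}$ preserves $[0,1]^2$-valued functions and is order preserving. Both $\bfS_h$ and the pointwise map $\bfR$ commute with translation. If $\bfZ_k(\cdot+\tau)\ge\bfZ_k$ for $\tau>0$, applying $\bfQ_{h,\beta}$ gives the same inequality for $\bfZ_{k+1}$. Induction from $\bfZ_0=\bfs$ then gives both assertions.

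\emph{Tail bounds.} This is the step needing the most care, since the constant must be exactly $\omega$. I argue by induction. Suppose $p_k,q_k\le e^{x+\omega kh}$. On the square we have $F_{1,\beta}\le p$ and $F_{2,\beta}\le\beta p\le a^2p$, so
\[
R_1\le(1+h)p,\qquad R_2\le q+h\beta p\le (1+a^2h)e^{x+\omega kh}.
\]
Convolution satisfies $G_{\kappa,h}*e^{x}=e^{\kappa h}e^x$, and $\kappa\le D_0$. The next iterate is therefore bounded by
\[
e^{D_0h}(1+a^2h)e^{x+\omega kh}\le e^{(D_0+a^2)h}e^{x+\omega kh}=e^{x+\omega(k+1)h}.
\]
(The bound $1+h\le1+a^2h$ covers the first component.) The base case is $s\le e^x$.

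For the right tail, set $y_i=1-z_i$. On the square, $1-R_1=y_1-hF_{1,\beta}\le y_1+ahy_2$ and $1-R_2\le y_2+hy_2$, using $-F_{1,\beta}\le ay_2$ and $-F_{2,\beta}\le y_2$ as in Lemma~2.3. The heat kernel preserves constants, so $1-\bfS_h\bfR=\bfS_h(\one-\bfR)$. The same computation with $e^{-x}$ and $(1+ah)\le1+a^2h$ closes the induction.

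\emph{Integrability.} Combining the two-sided bounds $0\le p_k,q_k\le\min\{1,e^{x+\omega kh}\}$ and $0\le1-p_k,1-q_k\le\min\{1,e^{-x+\omega kh}\}$ with the matching bounds for $s$, we get $|\bfZ_k-\bfs|\le(e^{\omega kh}+1)e^{-|x|}$, which is integrable. For $\bfF_\beta$, note that $F_{1,\beta}$ has the factor $p$ and also vanishes at $(1,1)$, and $F_{2,\beta}$ has the factor $(1-q)$ and vanishes at $(0,0)$. On the square, $|F_{1,\beta}|\le C\min\{p,\,(1-p)+(1-q)\}$ and $|F_{2,\beta}|\le C\min\{p+q,\,1-q\}$. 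Hence $|\bfF_\beta(\bfZ_k)|\le Ce^{\omega kh}e^{-|x|}$, which lies in $L^1$.

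I expect the main obstacle to be the step-size bookkeeping: checking that the single condition $h\le(1+a^2)^{-1}$ simultaneously makes $\partial_pR_1$ and $\partial_qR_2$ nonnegative and lets the per-step growth factor be absorbed into $e^{\omega h}$. Both are short, explicit checks.
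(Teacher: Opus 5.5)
Your proposal is correct and follows the paper's proof essentially step by step. Both arguments use the same ingredients: nonnegativity of the Jacobian of the reaction step together with the fixed corners $(0,0)$ and $(1,1)$ for invariance; translation invariance for monotonicity in $x$; a reaction-step envelope factor $1+ha^2$ combined with $G_{\kappa,h}*e^{\pm x}=e^{\kappa h}e^{\pm x}$ for the tails; and those tails for integrability. The only difference is cosmetic: you write explicit pointwise bounds for $|\bfF_\beta|$, whereas the paper simply invokes vanishing at the corners and Lipschitz continuity on the square.
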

\begin{proof}
The off-diagonal entries of $D\bfR_{h,\beta}$ are nonnegative on the square. Its diagonal entries satisfy
\[
\partial_p(\bfR_{h,\beta})_1\geq1-h(a+1)\geq0,\qquad
\partial_q(\bfR_{h,\beta})_2\geq1-h(\beta+1)\geq0,
\]
because $a\leq a^2$ and $\beta\leq a^2$. Hence the reaction map is order preserving. Since
\[
\bfR_{h,\beta}(0,0)=(0,0),\qquad \bfR_{h,\beta}(1,1)=(1,1),
\]
it also preserves the square. Gaussian convolution preserves order and bounds, so induction gives invariance for the iterates. Order preservation and translation invariance imply spatial monotonicity.

For the left-tail envelope, the reaction update satisfies
\[
p+hF_{1,\beta}(p,q)\leq(1+h)p,\qquad
q+hF_{2,\beta}(p,q)\leq q+h\beta p.
\]
For the right-tail envelope, put $\bfy=(1-p,1-q)^\top$. Then
\[
y_1-hF_{1,\beta}(1-y_1,1-y_2)\leq y_1+hay_2,
\]
\[
y_2-hF_{2,\beta}(1-y_1,1-y_2)\leq(1+h)y_2.
\]
Since $1,a,\beta\leq a^2$, each common envelope is multiplied by at most $1+ha^2$ during the reaction step. Gaussian convolution then gives the required induction for (3.3). In fact,
\[
G_{\kappa,h}*e^{\pm x}=e^{\kappa h}e^{\pm x},
\]
and one step multiplies the common exponential envelope by at most
\[
(1+ha^2)e^{D_0h}\leq e^{\omega h}.
\]
The induction starts from $s(x)\leq e^x$ and $1-s(x)\leq e^{-x}$. These tails give $\bfZ_k-\bfs\in L^1$. Since $\bfF_\beta$ vanishes at $(0,0)$ and $(1,1)$ and is Lipschitz on the square, the same tails give $\bfF_\beta(\bfZ_k)\in L^1$.
\end{proof}

\begin{lemma}\label{lem:mass-identity}
Under (3.2), for every positive integer $N$,
\begin{equation}\label{eq:discrete-mass}
\int_{\R}(p_N-s)\dd x=h\sum_{k=0}^{N-1}\int_{\R}F_{1,\beta}(p_k,q_k)\dd x.
\end{equation}
All integrals are absolutely convergent. For fixed $N$ and $h$, the left-hand side is continuous as a function of $\beta\in I_a$.
\end{lemma}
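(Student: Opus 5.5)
The plan is to prove the one-step identity
\[
\int_{\R}(p_{k+1}-p_k)\dd x=h\int_{\R}F_{1,\beta}(p_k,q_k)\dd x
\]
and then sum it telescopically. Writing $p_N-s=\sum_{k=0}^{N-1}(p_{k+1}-p_k)$ is legitimate because each $p_k-s\in L^1$ by Lemma~3.1, so each difference $p_{k+1}-p_k=(p_{k+1}-s)-(p_k-s)$ is integrable and linearity of the integral applies. Absolute convergence of every term is thus already supplied by Lemma~3.1.

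For the one-step identity, split $p_{k+1}-p_k$ as
\[
\bigl(G_{1,h}*p_k-p_k\bigr)+hG_{1,h}*F_{1,\beta}(p_k,q_k).
\]
The second term integrates to $h\int F_{1,\beta}(p_k,q_k)\dd x$ by Fubini--Tonelli, since $F_{1,\beta}(p_k,q_k)\in L^1$ by Lemma~3.1 and $\int G_{1,h}=1$. The main point is the first term, $\int(G_{1,h}*p_k-p_k)\dd x=0$, which needs care because $p_k$ itself is not integrable. I would write $p_k=s+g$ with $g=p_k-s\in L^1$. Then $\int(G*g-g)=0$ by Fubini, so it remains to show $\int(G_{1,h}*s-s)\dd x=0$. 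For this, use
\[
G*s(x)-s(x)=\int G(y)\bigl[s(x-y)-s(x)\bigr]\dd y.
\]
The function $(x,y)\mapsto G(y)|s(x-y)-s(x)|$ is integrable, because $\int_{\R}|s(x-y)-s(x)|\dd x=|y|$ by monotonicity of $s$ and $s(\pm\infty)\in\{0,1\}$, and $\int G|y|<\infty$. Fubini then gives $\int G(y)\,(-y)\dd y=0$ by symmetry of $G$. Equivalently, the mass of any bounded monotone transition shifts exactly by the translation amount, as in (2.12). This verification is the main (though mild) obstacle, since the naive Fubini on $p_k$ alone is invalid.

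For continuity in $\beta$ with $N,h$ fixed, I would argue by induction on $k$ that $\beta\mapsto\bfZ_k-\bfs$ is continuous into $L^1(\R;\R^2)$. The map $\bfR_{h,\beta}$ is Lipschitz on the square uniformly in $\beta\in I_a$ and affine in $\beta$, and $\bfS_h$ is an $L^1$ contraction on differences. Hence
\[
\norm{\bfZ_{k+1}^{\beta}-\bfZ_{k+1}^{\beta'}}_{L^1}\le C\norm{\bfZ_k^\beta-\bfZ_k^{\beta'}}_{L^1}+h|\beta-\beta'|\,\norm{(1-q_k)p_k}_{L^1}.
\]
The last norm is finite, uniformly in $\beta$, by the tail bounds (3.3): $(1-q_k)p_k\le\min\{e^{x+\omega kh},e^{-x+\omega kh}\}$. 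Alternatively, one can apply dominated convergence pointwise using the $\beta$-uniform envelopes (3.3). Integrating $p_N-s$ then gives continuity of the left-hand side.
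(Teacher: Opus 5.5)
Your proposal is correct and follows the paper's proof of the mass identity. Both use the same Fubini argument, based on $\int_{\R}|s(x-y)-s(x)|\dd x=|y|$ and the zero mean of $G_{1,h}$, to obtain $\int_{\R}(G_{1,h}*s-s)\dd x=0$; the identity then follows from the one-step balance and telescoping, with integrability supplied by Lemma~3.1.

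Your continuity argument takes a slightly different route. You run an $L^1$ Lipschitz induction, using contraction of $\bfS_h$, Lipschitz and affine dependence of $\bfR_{h,\beta}$, and the $\beta$-uniform envelope for $p_k(1-q_k)$ from (3.3). This gives a quantitative (Lipschitz) conclusion, slightly stronger than what the paper proves. The paper instead shows pointwise continuity of each iterate and then applies dominated convergence with the $\beta$-independent majorant $(e^{\omega Nh}+1)e^{-|x|}$, which is the alternative you also mention.
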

\begin{proof}
The reference function $s$ is not integrable, so preservation of excess mass under heat convolution requires a separate justification. For $y\in\R$, monotonicity of $s$ and $\int_\R s'\dd x=1$ give
\begin{equation}\label{eq:shift}
\int_{\R}|s(x-y)-s(x)|\dd x=|y|,\qquad
\int_{\R}[s(x-y)-s(x)]\dd x=-y.
\end{equation}
In particular, the double integral needed for Fubini's theorem is absolutely convergent, since
\[
\int_{\R}\int_{\R}G_{\kappa,h}(y)|s(x-y)-s(x)|\dd x\dd y
=\int_{\R}|y|G_{\kappa,h}(y)\dd y=2\sqrt{\frac{\kappa h}\pi}<\infty.
\]
Since $G_{\kappa,h}$ has zero mean, Fubini's theorem yields
\begin{equation}\label{eq:conv0}
\int_{\R}[G_{\kappa,h}*s-s]\dd x=0.
\end{equation}
For $f=s+w$ with $w\in L^1(\R)$, it follows that
\[
\int_{\R}[G_{\kappa,h}*f-s]\dd x=\int_{\R}w\dd x.
\]
Applying this equality to $f=p_k+hF_{1,\beta}(p_k,q_k)$ gives
\[
\int_{\R}(p_{k+1}-s)\dd x=\int_{\R}(p_k-s)\dd x+h\int_{\R}F_{1,\beta}(p_k,q_k)\dd x.
\]
Summation and $p_0=s$ prove (3.4). The integrability assertions follow from Lemma~3.1.

For fixed $N$ and $h$, the reaction update is continuous in the state and in $\beta$ and takes values in $[0,1]^2$. In each finite Gaussian convolution, its components are therefore dominated by the integrable Gaussian kernel. Induction and dominated convergence give pointwise continuity of $\bfZ_N$ in $\beta$. Writing $p_N^\beta$ to display this dependence, the uniform tails (3.3) give
\[
|p_N^\beta(x)-s(x)|\leq(e^{\omega Nh}+1)e^{-|x|},\qquad \beta\in I_a.
\]
This majorant belongs to $L^1(\R)$ and is independent of $\beta$. A second application of dominated convergence proves continuity of the excess-mass integral.
\end{proof}

The finite construction also preserves the order of the competition parameter. If $\beta_1\leq\beta_2$ and $(p,q)\in[0,1]^2$, then
\[
\bfR_{h,\beta_2}(p,q)-\bfR_{h,\beta_1}(p,q)
=\begin{pmatrix}0\\h(\beta_2-\beta_1)p(1-q)\end{pmatrix}\geq0.
\]
Combining this inequality with state-order preservation and positivity of Gaussian convolution, induction from the common initial function gives $\bfZ_k^{\beta_1}\leq\bfZ_k^{\beta_2}$. Since their differences from $\bfs$ are integrable, it follows that, for every prescribed $m$,
\[
M_m(a,\rho,\beta_1)\leq M_m(a,\rho,\beta_2).
\]
Thus the finite-$m$ integrand in (1.16) is nonincreasing in $\beta$. This order property supplements the continuity proved above; the limit argument does not require strict parameter monotonicity of the finite iterates.

\section{Quantitative \texorpdfstring{$L^1$}{L1} error estimates}
We use the norms
\begin{equation}\label{eq:norms}
\norm{\bf f}_1=\int_{\R}(|f_1|+|f_2|)\dd x,\qquad
\norm{\bf f}_\infty=\max_{i=1,2}\norm{f_i}_{L^\infty(\R)}.
\end{equation}
All constants below are independent of $\beta\in I_a$. In addition to $D_0$ in (2.8), set
\begin{equation}\label{eq:constants}
\begin{gathered}
\Lambda=2a^2+1,\qquad H_2=2a^2+2,\\
A=\frac{1+\rho}2+a+a^2-2,\qquad C_2=1+\frac{H_2}{2\Lambda},\\
C(a,\rho)=D_0\Lambda+\frac34 D_0H_2+\frac12\Lambda A.
\end{gathered}
\end{equation}
The constants are deliberately chosen to be explicit and uniform on the whole integration interval.

\begin{lemma}\label{lem:derivatives}
The solution of (2.6) satisfies, for $t\geq0$,
\begin{align}
\norm{\bfz(t)-\bfs}_1&\leq\frac A\Lambda(e^{\Lambda t}-1),\label{eq:4.3}\\
\norm{\bfz_x(t)}_1&\leq2e^{\Lambda t},\qquad \norm{\bfz_x(t)}_\infty\leq\frac14e^{\Lambda t},\label{eq:4.4}\\
\norm{\bfz_{xx}(t)}_1&\leq C_2e^{2\Lambda t},\label{eq:4.5}\\
\norm{\bfz_t(t)}_1&\leq(D_0C_2+A)e^{2\Lambda t},\label{eq:4.6}\\
\norm{\bfD\partial_{xx}\bfF_\beta(\bfz(t))}_1&\leq D_0\left(\Lambda C_2+\frac{H_2}2\right)e^{2\Lambda t}.\label{eq:4.7}
\end{align}
\end{lemma}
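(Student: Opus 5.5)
The plan is to derive each estimate from a linear equation satisfied by $\bfz-\bfs$, by $\bfz_x$, or by $\bfz_{xx}$. In each case we apply Duhamel's formula with the diagonal heat semigroup $\bfS_t$, which is a contraction on $L^1$ and on $L^\infty$, and then Gronwall's inequality. Two structural facts about $\bfF_\beta$ on $[0,1]^2$ are used throughout.
\begin{itemize}
\item The first is an $\ell^1$ Lipschitz/Jacobian bound $\norm{D\bfF_\beta(\bfz)\bfy}_{\ell^1}\le\Lambda\norm{\bfy}_{\ell^1}$, and the matching row-sum bound in $\ell^\infty$. This comes from bounding the column and row sums of $|D\bfF_\beta|$ using $|1-a-2p+aq|$, $ap$, $\beta(1-q)$, $|\beta p+1-2q|$ and $\beta\le a^2$; each sum should be at most $2a^2+1=\Lambda$.
\item The second is a Hessian bound $|D^2\bfF_\beta(\bfz)[\bfy,\bfy]|_{\ell^1}\le H_2\norm{\bfy}_{\ell^\infty}\norm{\bfy}_{\ell^1}$ with $H_2=2a^2+2$. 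The second derivatives of the quadratic polynomials are constants involving $1,a,\beta$.
\end{itemize}
I would verify both facts first, since every constant in (4.2) is built from them.

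For (4.3), write $\bfw=\bfz-\bfs$. Then $\bfw_t=\bfD\bfw_{xx}+\bfD\bfs''+\bfF_\beta(\bfs)+[\bfF_\beta(\bfz)-\bfF_\beta(\bfs)]$. The source term is explicit. First, $\int|s''|\dd x=2\max s'=1/2$, so $\norm{\bfD\bfs''}_1=(1+\rho)/2$. Next, $F_{1,\beta}(s,s)=(1-a)s(1-s)$ and $F_{2,\beta}(s,s)=(\beta-1)s(1-s)$, and $\int s(1-s)\dd x=1$; these contribute at most $(a-1)+(a^2-1)$. The three contributions add up to exactly $A$. Duhamel's formula then gives $\norm{\bfw(t)}_1\le At+\Lambda\int_0^t\norm{\bfw}_1$, and Gronwall yields $(A/\Lambda)(e^{\Lambda t}-1)$.

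The $L^1$ part of (4.4) is immediate. Each $z_i$ is nondecreasing in $x$ with limits $0$ and $1$ (Lemma 2.3), so $\norm{\bfz_x}_1=2$. For the $L^\infty$ part, $\bfz_x$ solves the cooperative linearized system $\bfy_t=\bfD\bfy_{xx}+D\bfF_\beta(\bfz)\bfy$ with data $s'\one\le\frac14\one$. Comparison with the spatially constant supersolution $\frac14e^{\Lambda t}\one$, using the row-sum bound, gives the result. For (4.5), differentiate once more: $\bfz_{xx}$ solves the same linear system plus the forcing $D^2\bfF_\beta(\bfz)[\bfz_x,\bfz_x]$. The initial norm is $\norm{\bfs''}_1\cdot$ (two components) $=1$. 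The forcing is bounded by $H_2\cdot\frac14e^{\Lambda t}\cdot2e^{\Lambda t}=\frac{H_2}{2}e^{2\Lambda t}$. Duhamel plus Gronwall give $e^{\Lambda t}+\frac{H_2}{2\Lambda}(e^{2\Lambda t}-e^{\Lambda t})\le C_2e^{2\Lambda t}$.

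Estimate (4.6) follows from $\bfz_t=\bfD\bfz_{xx}+\bfF_\beta(\bfz)$. Here $\norm{\bfF_\beta(\bfz)}_1\le\norm{\bfF_\beta(\bfs)}_1+\Lambda\norm{\bfz-\bfs}_1\le Ae^{\Lambda t}$, and this is absorbed into $Ae^{2\Lambda t}$. Estimate (4.7) follows from the chain rule $\partial_{xx}\bfF_\beta(\bfz)=D\bfF_\beta\,\bfz_{xx}+D^2\bfF_\beta[\bfz_x,\bfz_x]$, bounded by $\Lambda C_2e^{2\Lambda t}+\frac{H_2}{2}e^{2\Lambda t}$, then multiplied by $D_0$.

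The step I expect to be the main obstacle is justifying the differentiated equations and Duhamel formulas in $L^1(\R)$, since $\bfz$ itself is not integrable. I would handle this in three stages.
\begin{enumerate}
\item Use interior parabolic regularity to get smoothness of $\bfz$ for $t>0$.
\item Use the exponential tails of Lemma 2.3, together with standard Schauder or Gaussian-kernel estimates, to show that $\bfz_x$ and $\bfz_{xx}$ decay exponentially in $|x|$ locally uniformly in $t$. This makes all the $L^1$ norms finite.
\item Write the mild formulations with the Gaussian kernels and invoke Fubini, exactly as in the proof of Lemma 3.2. The smooth initial function $\bfs$, whose derivatives decay exponentially, handles continuity at $t=0$.
\end{enumerate}
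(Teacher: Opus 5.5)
Your proposal is correct and follows essentially the paper's proof: the same Jacobian and Hessian bounds, the same $L^1$ Duhamel--Gronwall estimates (producing exactly the constants $A$ and $C_2$), and the same chain-rule bounds for (4.6)--(4.7). The differences are only in how the steps are justified. You obtain $\norm{\bfz_x}_1=2$ from monotonicity, which the paper also notes, and you get the uniform bound by comparison with $\tfrac14e^{\Lambda t}\one$ instead of Gronwall on difference quotients. You also justify the $L^1$ mild equations through parabolic regularity plus exponential decay of derivatives, whereas the paper constructs $\bfz_x$ and $\bfz_{xx}$ directly as limits of spatial difference quotients in $C([0,T];L^1)\cap C([0,T];BUC)$. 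The paper's route needs no derivative-decay or Schauder estimates and handles $t=0$ automatically.
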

\begin{proof}
On $[0,1]^2$, the absolute values of the Jacobian entries are bounded by
\[
|D\bfF_\beta(p,q)|\leq\begin{pmatrix}a+1&a\\\beta&\beta+1\end{pmatrix}.
\]
Both the induced $\ell^1$ norm and the induced $\ell^\infty$ norm of this matrix are at most $\Lambda$. Moreover, the constant Hessian satisfies, for $v,w\in\R^2$,
\begin{equation}\label{eq:hessian}
|D^2\bfF_\beta(p,q)[v,w]|_{\ell^1}\leq H_2|v|_{\ell^\infty}|w|_{\ell^1}.
\end{equation}
Indeed, the two components on the left before taking absolute values are
\[
-2v_1w_1+a(v_1w_2+v_2w_1),\qquad -\beta(v_1w_2+v_2w_1)+2v_2w_2,
\]
and the resulting bound is $2+a+\beta\leq H_2$.

Fix a finite $T>0$. The diagonal heat semigroup $\bfS_t$ is a contraction in both norms in (4.1), and it is strongly continuous on $L^1(\R;\R^2)$ and on $BUC(\R;\R^2)$, the space of bounded uniformly continuous functions. The bounded mild solution belongs to $C([0,T];BUC(\R;\R^2))$, since its initial function is uniformly continuous and the reaction is locally Lipschitz on the invariant square. Although $\bfs\notin L^1$, the tails in Lemma~2.3 provide an integrable bound for $|\bfz(t)-\bfs|$ that is uniform for $0\leq t\leq T$. Dominated convergence therefore gives
\[
\mathbf w:=\bfz-\bfs\in C([0,T];L^1(\R;\R^2)).
\]
Furthermore, $s''\in L^1$, and componentwise heat convolution yields
\[
\bfS_t\bfs-\bfs=\int_0^t\bfS_\tau\bfD\bfs''\dd\tau
\quad\text{in }L^1(\R;\R^2).
\]
This identity also follows by differentiating the heat convolution of the smooth bounded function $s$, and then integrating in time; its right-hand side is an $L^1$ Bochner integral. Consequently, subtracting $\bfs$ from the bounded mild equation gives the following valid $L^1$ mild equation:
\[
\mathbf w(t)=\int_0^t\bfS_{t-\tau}\bigl[\bfD\bfs''+\bfF_\beta(\bfs)+\bfF_\beta(\bfz(\tau))-\bfF_\beta(\bfs)\bigr]\dd\tau.
\]
The logistic function satisfies
\[
\int_{\R}s(1-s)\dd x=1,\qquad\norm{s''}_{L^1(\R)}=\frac12.
\]
Since
\[
\bfF_\beta(\bfs)=\bigl((1-a)s(1-s),(\beta-1)s(1-s)\bigr)^\top,
\]
we have
\[
\norm{\bfD\bfs''+\bfF_\beta(\bfs)}_1\leq\frac{1+\rho}2+a+\beta-2\leq A.
\]
Semigroup contraction and Gronwall's inequality now prove (4.3). They also give
\begin{equation}\label{eq:reaction-L1}
\norm{\bfF_\beta(\bfz(t))}_1\leq\norm{\bfF_\beta(\bfs)}_1+\Lambda\norm{\bfz(t)-\bfs}_1\leq Ae^{\Lambda t}.
\end{equation}

We establish the spatial derivatives by difference quotients. For $\varepsilon\ne0$, set
\[
\delta_\varepsilon f(x)=\frac{f(x+\varepsilon)-f(x)}\varepsilon,
\]
\[
B_\varepsilon(t,x)=\int_0^1D\bfF_\beta\bigl((1-\theta)\bfz(t,x)+\theta\bfz(t,x+\varepsilon)\bigr)\dd\theta.
\]
Translation invariance of the heat semigroup and the mean-value formula give
\[
\delta_\varepsilon\bfz(t)=\bfS_t\delta_\varepsilon\bfs+
\int_0^t\bfS_{t-\tau}B_\varepsilon(\tau)\delta_\varepsilon\bfz(\tau)\dd\tau.
\]
This equation is legitimate in $L^1$: translations of $\mathbf w$ belong to $C([0,T];L^1)$, and $\delta_\varepsilon\bfs\in L^1$ because $s'\in L^1$. The square is convex, so both induced norms of $B_\varepsilon$ are at most $\Lambda$. Moreover,
\[
\norm{\delta_\varepsilon\bfs}_1\leq2,\qquad\norm{\delta_\varepsilon\bfs}_\infty\leq\frac14.
\]
Gronwall's inequality therefore bounds the difference quotients by $2e^{\Lambda t}$ in $L^1$ and by $e^{\Lambda t}/4$ in the uniform norm. To justify their convergence, let $\mathbf y$ be the unique solution, in $C([0,T];L^1)\cap C([0,T];BUC)$, of
\[
\mathbf y(t)=\bfS_t\bfs'+\int_0^t\bfS_{t-\tau}D\bfF_\beta(\bfz(\tau))\mathbf y(\tau)\dd\tau.
\]
Such a solution exists by the linear mild equation with bounded coefficients. The continuity of $\bfz$ with values in $BUC$ implies that $B_\varepsilon\to D\bfF_\beta(\bfz)$ uniformly in $(t,x)$ on $[0,T]\times\R$. Also, $\delta_\varepsilon\bfs\to\bfs'$ in $L^1$ and in the uniform norm. Subtracting the two mild equations and applying Gronwall's inequality therefore gives $\delta_\varepsilon\bfz\to\mathbf y$ in $C([0,T];L^1)$ and in $C([0,T];BUC)$. The limit is the spatial derivative $\bfz_x$ (equivalently, identify it first in distributions). Thus $\bfz_x=\mathbf y$, its initial value is $\bfs'$, and (4.4) follows, including at $t=0$.

For the second derivative, take a spatial difference quotient of this now justified mild equation for $\bfz_x$. Since $D^2\bfF_\beta$ is constant, the product difference is exactly
\[
\begin{aligned}
&\delta_\varepsilon\bigl[D\bfF_\beta(\bfz)\bfz_x\bigr](t,x)\\
&\quad=D\bfF_\beta(\bfz(t,x+\varepsilon))\delta_\varepsilon\bfz_x(t,x)\\
&\qquad+D^2\bfF_\beta[\delta_\varepsilon\bfz(t,x),\bfz_x(t,x)].
\end{aligned}
\]
The first coefficient again has induced $\ell^1$ norm at most $\Lambda$. By (4.8) and the first-derivative bounds, the additional source has $L^1$ norm at most
\[
H_2\norm{\delta_\varepsilon\bfz(t)}_\infty\norm{\bfz_x(t)}_1\leq\frac{H_2}2e^{2\Lambda t}.
\]
The initial difference quotient satisfies $\norm{\delta_\varepsilon\bfs'}_1\leq\norm{\bfs''}_1=1$. Semigroup contraction and Gronwall's inequality consequently give
\[
\norm{\delta_\varepsilon\bfz_x(t)}_1\leq e^{\Lambda t}+\frac{H_2}2\int_0^t e^{\Lambda(t-\tau)}e^{2\Lambda\tau}\dd\tau
\leq C_2e^{2\Lambda t}.
\]
For completeness, convergence of these difference quotients follows from the same linear mild-equation argument: their coefficients converge uniformly to $D\bfF_\beta(\bfz)$, their initial functions converge to $\bfs''$ in $L^1$ and in the uniform norm, and their sources converge in $C([0,T];L^1)\cap C([0,T];BUC)$ to $D^2\bfF_\beta[\bfz_x,\bfz_x]$.

\begin{bluechange}
Here are the details of the last convergence statement. Uniform continuity of the map $t\mapsto\bfz(t)$ in $BUC$, and compactness of its image for $t\in[0,T]$, imply uniform equicontinuity in $x$. Consequently,
\[
\sup_{0\leq t\leq T}\norm{D\bfF_\beta(\bfz(t,\cdot+\varepsilon))-D\bfF_\beta(\bfz(t,\cdot))}_\infty\longrightarrow0.
\]
The first-derivative argument already gives
\[
\delta_\varepsilon\bfz\longrightarrow\bfz_x
\quad\text{in }C([0,T];L^1)\cap C([0,T];BUC).
\]
Since $\sup_{0\leq t\leq T}(\norm{\bfz_x(t)}_1+\norm{\bfz_x(t)}_\infty)<\infty$, the bilinear estimate (4.8) yields
\[
\begin{aligned}
&\norm{D^2\bfF_\beta[\delta_\varepsilon\bfz(t),\bfz_x(t)]
-D^2\bfF_\beta[\bfz_x(t),\bfz_x(t)]}_1\\
&\qquad\leq H_2\norm{\delta_\varepsilon\bfz(t)-\bfz_x(t)}_\infty
\norm{\bfz_x(t)}_1.
\end{aligned}
\]
The right-hand side tends to zero uniformly for $0\leq t\leq T$.
The same argument, with the uniform norm in both factors, proves convergence in $C([0,T];BUC)$. Also, $\delta_\varepsilon\bfs'\to\bfs''$ in both spaces. Therefore, by subtracting the linear mild equations and applying Gronwall's inequality, $\delta_\varepsilon\bfz_x$ converges in $C([0,T];L^1)\cap C([0,T];BUC)$ to the unique solution of the limiting equation. Identifying this limit distributionally establishes the existence and asserted continuity of $\bfz_{xx}$, including at $t=0$.
\end{bluechange}

The limit is $\bfz_{xx}$ and satisfies
\[
\bfz_{xx}(t)=\bfS_t\bfs''+\int_0^t\bfS_{t-\tau}
\bigl[D\bfF_\beta(\bfz(\tau))\bfz_{xx}(\tau)+D^2\bfF_\beta[\bfz_x(\tau),\bfz_x(\tau)]\bigr]\dd\tau.
\]
In particular, $\bfz_{xx}\in C([0,T];L^1)$ with initial value $\bfs''$, and (4.5) follows.

Now $\bfD\bfz_{xx}+\bfF_\beta(\bfz)$ is continuous with values in $L^1$. The evolution equation, or its distributional time integral, gives
\[
\bfz(t)-\bfs=\int_0^t\bigl[\bfD\bfz_{xx}(\tau)+\bfF_\beta(\bfz(\tau))\bigr]\dd\tau
\quad\text{in }L^1.
\]
Hence $\bfz-\bfs\in C^1([0,T];L^1)$, with the derivative at zero understood as a right derivative. Combining (4.5) with (4.9) proves (4.6). Finally, the chain rule gives
\[
\partial_x\bfF_\beta(\bfz)=D\bfF_\beta(\bfz)\bfz_x,\qquad
\partial_{xx}\bfF_\beta(\bfz)=D\bfF_\beta(\bfz)\bfz_{xx}+D^2\bfF_\beta[\bfz_x,\bfz_x].
\]
Together with (4.9), these identities show that $\bfF_\beta(\bfz(t))\in W^{2,1}(\R;\R^2)$ for every $t\geq0$; in fact the reaction is continuous in this space on each finite time interval. Semigroup contraction, (4.8), and the previously proved bounds yield
\[
\norm{\bfD\partial_{xx}\bfF_\beta(\bfz(t))}_1\leq D_0\left(\Lambda C_2+\frac{H_2}2\right)e^{2\Lambda t},
\]
which is (4.7). Since $T$ was arbitrary, all conclusions hold for $t\geq0$.

Spatial monotonicity and the endpoint limits in Lemma~2.3 also give the sharper identity
\[
\norm{\bfz_x(t)}_1=\sum_{i=1}^2\int_{\R}\partial_x z_i(t,x)\dd x=2,\qquad t\geq0.
\]
The weaker first-derivative bound in (4.4) remains valid and is retained in the estimates above; no change to the explicit constants is needed.
\end{proof}

We now use these bounds to estimate the error of the Gaussian approximation.
\begin{proposition}\label{prop:approx}
Let $h$ satisfy (3.2), let $N\geq1$, and set $T=Nh$. For the Gaussian iterate $\bfZ_N$ and the exact solution of (2.6) with the same initial data,
\begin{equation}\label{eq:error}
\norm{\bfZ_N-\bfz(T)}_1\leq C(a,\rho)\,Th\,e^{2\Lambda T}.
\end{equation}
Consequently, for (1.11),
\begin{equation}\label{eq:diagonal-error}
\sup_{\beta\in I_a}|M_m(a,\rho,\beta)-J(m;a,\rho,\beta)|
\leq\frac{C(a,\rho)}{1+a^2}e^{-m^2+2\Lambda m}\longrightarrow0.
\end{equation}
\end{proposition}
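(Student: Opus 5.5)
The proof will be a standard Lady Windermere's fan argument: a local (one-step) consistency estimate combined with an $L^1$ stability estimate for the discrete map $\bfQ_{h,\beta}$.

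\emph{Stability.} I start with stability. For two $[0,1]^2$-valued functions $\mathbf f,\mathbf g$ with $\mathbf f-\mathbf g\in L^1$, the reaction map gives
\[
\norm{\bfR_{h,\beta}\mathbf f-\bfR_{h,\beta}\mathbf g}_1\le(1+h\Lambda)\norm{\mathbf f-\mathbf g}_1 ,
\]
because the induced $\ell^1$ norm of $I+hD\bfF_\beta$ is at most $1+h\Lambda$ on the convex square. Since $\bfS_h$ is an $L^1$ contraction, $\bfQ_{h,\beta}$ is Lipschitz in $L^1$ with constant $1+h\Lambda\le e^{\Lambda h}$. Lemma~\ref{lem:discrete} keeps the iterates in the square, and the exact solution also stays there.

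Write $t_k=kh$. Telescoping
\[
\bfZ_N-\bfz(T)=\sum_{k=0}^{N-1}\Bigl[\bfQ^{N-k-1}\bigl(\bfQ\bfz(t_k)\bigr)-\bfQ^{N-k-1}\bigl(\bfz(t_{k+1})\bigr)\Bigr]
\]
gives
\[
\norm{\bfZ_N-\bfz(T)}_1\le\sum_{k}e^{\Lambda(T-t_{k+1})}\,\norm{\bfQ\bfz(t_k)-\bfz(t_{k+1})}_1 .
\]
All differences lie in $L^1$ by Lemmas~\ref{lem:tails} and~\ref{lem:discrete}.

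\emph{Local error.} For $\bfz(t_k+h)$ I use the Duhamel formula on $[t_k,t_k+h]$:
\[
\bfz(t_k+h)-\bfQ\bfz(t_k)=\int_0^h\Bigl[\bfS_{h-\tau}\bfF_\beta\bigl(\bfz(t_k+\tau)\bigr)-\bfS_h\bfF_\beta\bigl(\bfz(t_k)\bigr)\Bigr]\dd\tau .
\]
I split the integrand as
\[
\bfS_{h-\tau}\bigl[\bfF_\beta(\bfz(t_k+\tau))-\bfF_\beta(\bfz(t_k))\bigr]+\bigl(\bfS_{h-\tau}-\bfS_h\bigr)\bfF_\beta(\bfz(t_k)).
\]
\begin{itemize}
\item The first term is at most $\Lambda\tau\sup\norm{\bfz_t}_1$. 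By (4.6) this gives a contribution $\tfrac12\Lambda h^2(D_0C_2+A)e^{2\Lambda(t_k+h)}$.
\item For the second term I write
\[
\bfS_h-\bfS_{h-\tau}=\int_{h-\tau}^h\bfS_\sigma\bfD\partial_{xx}\dd\sigma ,
\]
which is valid since $\bfF_\beta(\bfz)\in W^{2,1}$. With (4.7) this gives $\tfrac12h^2D_0(\Lambda C_2+H_2/2)e^{2\Lambda t_k}$.
\end{itemize}
The local error is therefore $\le C' h^2e^{2\Lambda t_{k+1}}$.

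I expect the main obstacle to be matching the explicit constant $C(a,\rho)$ exactly. The terms above must be regrouped, using $C_2=1+H_2/(2\Lambda)$ and absorbing the factor from $e^{2\Lambda h}$ or the time shift, so that they collapse to $D_0\Lambda+\tfrac34D_0H_2+\tfrac12\Lambda A$. If this exact form is not reached, I would evaluate the first term at the left endpoint through $\bfz_t$ bounds and adjust; the order $h^2$ itself is robust.

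\emph{Summation.} Summing, $\sum_k e^{\Lambda(T-t_{k+1})}e^{2\Lambda t_{k+1}}\le Ne^{2\Lambda T}$. This gives the bound $C\,Nh^2e^{2\Lambda T}=C\,Th\,e^{2\Lambda T}$, which is (4.10).

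\emph{Consequence (4.11).} By Lemma~\ref{lem:mass-identity}'s definitions,
\[
|M_m-J(m)|=\Bigl|\int_{\R}\bigl(p_{N_m}-z_1(m)\bigr)\dd x\Bigr|\le\norm{\bfZ_{N_m}-\bfz(m)}_1 .
\]
Take $T=m$ and $h_m=m/N_m\le m/\bigl((1+a^2)m^2e^{m^2}\bigr)$, which satisfies (3.2). Then $Th_m\le e^{-m^2}/(1+a^2)$. Since the constants do not depend on $\beta\in I_a$, the supremum bound follows, and $e^{-m^2+2\Lambda m}\to0$.
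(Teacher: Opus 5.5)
Your proposal is correct and follows the paper's proof essentially step for step. It uses the same Duhamel split of the one-step defect, the same bounds from (4.6) and (4.7), the same $(1+h\Lambda)$ Lipschitz stability of $\bfQ_{h,\beta}$ in $L^1$, and the same exponent count $\Lambda(T-t_{k+1})+2\Lambda t_{k+1}\le 2\Lambda T$.

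Your worry about the constant is unfounded. Bound $e^{2\Lambda t_k}$ by $e^{2\Lambda t_{k+1}}$ and use $2D_0\Lambda C_2=2D_0\Lambda+D_0H_2$; then the contributions $\tfrac12\Lambda(D_0C_2+A)$ and $\tfrac12 D_0(\Lambda C_2+H_2/2)$ sum exactly to $C(a,\rho)=D_0\Lambda+\tfrac34D_0H_2+\tfrac12\Lambda A$, so no adjustment is needed.
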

\begin{proof}
The variation-of-constants formula and (3.1) give
\[
\bfz(t+h)-\bfQ_{h,\beta}\bfz(t)
=\int_0^h\bigl[\bfS_{h-\tau}\bfF_\beta(\bfz(t+\tau))
-\bfS_h\bfF_\beta(\bfz(t))\bigr]\dd\tau.
\]
Using $\bfS_h=\bfS_{h-\tau}\bfS_\tau$, split the integrand as
\[
\begin{aligned}
&\bfS_{h-\tau}\bfF_\beta(\bfz(t+\tau))-\bfS_h\bfF_\beta(\bfz(t))\\
&\qquad=\bfS_{h-\tau}\bigl[\bfF_\beta(\bfz(t+\tau))-\bfF_\beta(\bfz(t))\bigr]
+\bfS_{h-\tau}(I-\bfS_\tau)\bfF_\beta(\bfz(t)).
\end{aligned}
\]
The $L^1$ time differentiability established in Lemma~4.1 makes the time integral below a Bochner integral in $L^1$. To estimate the time variation, use Lipschitz continuity and (4.6) to obtain
\[
\begin{aligned}
\norm{\bfF_\beta(\bfz(t+\tau))-\bfF_\beta(\bfz(t))}_1
&\leq\Lambda\int_t^{t+\tau}\norm{\bfz_t(\sigma)}_1\dd\sigma\\
&\leq\Lambda\tau(D_0C_2+A)e^{2\Lambda(t+h)}.
\end{aligned}
\]
For the semigroup variation, note first that
\[
\partial_x\bfF_\beta(\bfz(t))=D\bfF_\beta(\bfz(t))\bfz_x(t)\in L^1,
\]
\[
\partial_{xx}\bfF_\beta(\bfz(t))=D\bfF_\beta(\bfz(t))\bfz_{xx}(t)+D^2\bfF_\beta[\bfz_x(t),\bfz_x(t)]\in L^1.
\]
Together with (4.9), this gives $\bfF_\beta(\bfz(t))\in W^{2,1}(\R;\R^2)$, the domain of the generator $\bfD\partial_{xx}$ of the diagonal heat semigroup on $L^1$. For every $\bf f$ in this domain, the identity
\[
(\bfS_\tau-I)\bf f=\int_0^\tau\bfS_\xi\bfD\bf f''\dd\xi
\]
and (4.7) imply
\[
\norm{(\bfS_\tau-I)\bfF_\beta(\bfz(t))}_1
\leq\tau D_0\left(\Lambda C_2+\frac{H_2}2\right)e^{2\Lambda t}.
\]
Splitting the local error into these two contributions and integrating in $\tau$ yields
\[
\begin{aligned}
\norm{\bfz(t+h)-\bfQ_{h,\beta}\bfz(t)}_1
&\leq\frac{h^2}2\left(2D_0\Lambda C_2+\Lambda A+\frac{D_0H_2}2\right)e^{2\Lambda(t+h)}\\
&=C(a,\rho)h^2e^{2\Lambda(t+h)}.
\end{aligned}
\]
The last equality follows from the definitions in (4.2).

For square-valued functions whose difference belongs to $L^1$, semigroup contraction gives
\[
\norm{\bfQ_{h,\beta}\bf f-\bfQ_{h,\beta}\bf g}_1\leq(1+h\Lambda)\norm{\bf f-\bf g}_1.
\]
Since the exact and discrete solutions remain in the invariant square, iteration of the local estimate gives
\[
\begin{aligned}
\norm{\bfZ_N-\bfz(T)}_1
&\leq C(a,\rho)h^2\sum_{k=0}^{N-1}(1+h\Lambda)^{N-1-k}e^{2\Lambda(k+1)h}\\
&\leq C(a,\rho)Nh^2e^{2\Lambda T}.
\end{aligned}
\]
Here we used $1+h\Lambda\leq e^{h\Lambda}$ and
\[
\Lambda(N-1-k)h+2\Lambda(k+1)h\leq2\Lambda Nh.
\]
This proves (4.10).

For the prescribed sequence, $h_m<(1+a^2)^{-1}$ for every $m\geq1$, and
\[
mh_m=\frac{m^2}{N_m}\leq\frac{e^{-m^2}}{1+a^2}.
\]
The difference between the first-component mass integrals is bounded by the vector $L^1$ error. Substituting $T=m$ and $h=h_m$ into (4.10) gives (4.11). Since $a$ is fixed, $e^{-m^2+2\Lambda m}$ tends to zero.
\end{proof}

\section{Proof of Theorem \ref{thm:main}}

\begin{proof}[Proof of Theorem \ref{thm:main}]
The step condition follows from (1.11). Lemmas~3.1 and~3.2 prove the well-definedness and invariance of every finite recursion, the absolute convergence of the mass integrals, and the equality of (1.14) and (1.15). They also show that $M_m(a,\rho,\cdot)$ is continuous on $I_a$.

By Lemma~2.4 and Proposition~4.2,
\begin{equation}\label{eq:mass-limit}
\lim_{m\to\infty}\frac{M_m(a,\rho,\beta)}m=c_{a,\rho}(\beta),\qquad \beta\in I_a.
\end{equation}
Proposition~2.1 shows that this limit is negative for $\beta<B(a,\rho)$ and positive for $\beta>B(a,\rho)$. Consequently,
\[
M_m(a,\rho,\beta)\longrightarrow
\begin{cases}-\infty,&\beta<B(a,\rho),\\+\infty,&\beta>B(a,\rho).
\end{cases}
\]
It follows that
\begin{equation}\label{eq:indicator}
\frac1{1+e^{M_m(a,\rho,\beta)}}\longrightarrow
\begin{cases}1,&\sqrt a\leq\beta<B(a,\rho),\\0,&B(a,\rho)<\beta\leq a^2.
\end{cases}
\end{equation}
No convergence of $M_m$ at the single parameter $\beta=B(a,\rho)$ is required. The integrands are bounded by one on the finite interval $I_a$. Dominated convergence and the strict bounds (1.7) give
\[
\lim_{m\to\infty}\int_{\sqrt a}^{a^2}\frac{1}{1+e^{M_m(a,\rho,\beta)}}\dd\beta
=\int_{\sqrt a}^{B(a,\rho)}1\dd\beta=B(a,\rho)-\sqrt a.
\]
Finally, (1.6) gives (1.16).
\end{proof}

\begin{remark}
The proof applies to any explicitly prescribed integer sequence $N_m$ satisfying
\begin{equation}\label{eq:alternative}
\frac m{N_m}\leq\frac1{1+a^2},\qquad \frac{m^2}{N_m}e^{2\Lambda m}\longrightarrow0.
\end{equation}
For example,
\begin{equation}\label{eq:example-N}
N_m=\left\lceil(1+a^2)m^2e^{(4a^2+3)m}\right\rceil
\end{equation}
also gives (1.16). In this case the right-hand side of the mass-error bound is at most
\[
\frac{C(a,\rho)}{1+a^2}e^{-m},
\]
since $2\Lambda=4a^2+2$. The conditions above guarantee convergence of the absolute mass error to zero. If only the threshold formula (1.16) is required, a weaker condition is sufficient:
\[
\frac m{N_m}\leq\frac1{1+a^2},\qquad \frac m{N_m}e^{2\Lambda m}\longrightarrow0.
\]
Indeed, (4.10) gives
\[
\sup_{\beta\in I_a}\frac{|M_m(a,\rho,\beta)-J(m;a,\rho,\beta)|}{m}
\leq C(a,\rho)\frac m{N_m}e^{2\Lambda m}\longrightarrow0.
\]
This is enough for (5.1) and hence for the indicator limit and the threshold formula. For example,
\[
N_m=\left\lceil(1+a^2)m^2e^{2\Lambda m}\right\rceil
\]
gives a normalized mass-error bound of $C(a,\rho)/[(1+a^2)m]$. For this choice the same estimate only bounds the absolute mass error by $C(a,\rho)/(1+a^2)$; it does not establish convergence of that absolute error to zero. We retain (1.11) for the stronger conclusion (4.11). The error estimate proved here does not justify a fixed time step or a polynomial step count as $m\to\infty$. This is a limitation of that estimate, not a nonconvergence assertion for every other choice of discretization.
\end{remark}

\begin{remark}
When $d=r$, Proposition~2.1 gives $b_*(a,d,r)=B(a,1)=a$. Accordingly,
\begin{equation}\label{eq:equal-ratio}
\lim_{m\to\infty}\int_{\sqrt a}^{a^2}\frac{1}{1+e^{M_m(a,1,\beta)}}\dd\beta=a-\sqrt a.
\end{equation}
This is consistent with species-exchange symmetry at $b=a$ and involves no perturbative restriction on $a-1$.
\end{remark}

\begin{remark}
The uniform bound (4.11) estimates the error between the exact excess mass and its Gaussian approximation at time $m$. It does not provide a convergence rate for (5.2) or a computable error bound for the final threshold approximation
\[
B_m(a,\rho):=\sqrt a+\int_{\sqrt a}^{a^2}\frac{1}{1+e^{M_m(a,\rho,\beta)}}\dd\beta.
\]
In particular, the proof uses only pointwise long-time convergence for each fixed $\beta$. A quantitative threshold error would require additional control near the zero-speed parameter. Errors due to spatial truncation and numerical quadrature are not included in (4.10), which treats exact Gaussian and spatial integrals.

The representation separates two roles. Bistable-front theory identifies a unique transition from negative to positive speed inside the explicit interval $I_a$. The Gaussian construction detects this transition through the sign of excess-mass growth and recovers its location by parameter integration. Although the number of prescribed operations is unbounded along the limiting sequence, each finite expression is completely determined by the coefficients and contains no unknown traveling-wave quantity.
\end{remark}

\begin{bluechange}
\section{Monostable evolution and equilibrium normalization}

Only for the proof, consider the initial-value problem
\begin{equation}\label{eq:LD-aux}
\LDz_t=\LDD\LDz_{xx}+F_\beta(\LDz),\qquad
\LDD=\diag(1,d),\qquad \LDz(0,x)=(f_a(x),0),
\end{equation}
where
\begin{equation}\label{eq:LD-F}
F_\beta(p,q)=
\begin{pmatrix}
 p(1-a-p+a q)\\
 r(1-q)(\beta p-q)
\end{pmatrix}.
\end{equation}
This is \eqref{eq:original} with $b=\beta$, $u=z_1$, and $v=1-z_2$, initially
$v\equiv1$.
In Sections~6--9, the reaction vector $F_\beta$ and the diffusion
matrix $\LDD$ are defined by the present $(d,r)$-dependent evolution;
they are distinct from the normalized $(\rho,1)$ reaction and diffusion
operators used in Sections~2--4. The off-diagonal derivatives of $F_\beta$ are nonnegative on
$[0,1]^2$:
\[
\partial_q F_{\beta,1}=ap\ge0,\qquad
\partial_p F_{\beta,2}=r\beta(1-q)\ge0.
\]
Thus the usual comparison principle applies there.

Let
\begin{equation}\label{eq:LD-E}
Q(\beta)=A(\beta)\min\{\beta,1\},\qquad
E_\beta=(A(\beta),Q(\beta)).
\end{equation}
Direct substitution gives $F_\beta(0,0)=F_\beta(E_\beta)=0$.
The initial vector lies between these two constant equilibria. Consequently,
\begin{equation}\label{eq:LD-invariant}
0\le z_1(t,x)\le A(\beta)\le1,\qquad
0\le z_2(t,x)\le Q(\beta)\le1.
\end{equation}
\Needspace{8\baselineskip}
The invading equilibrium in the original variables is
\begin{center}
\begin{tabular}{@{}ll@{}}
\toprule
Parameter range & Invading equilibrium $(u,v)$\\
\midrule
$0<b<1$ & $\bigl((1-a)/(1-ab),(1-b)/(1-ab)\bigr)$\\[2pt]
$b=1$ & $(1,0)$, with a degenerate resident direction\\
$b>1$ & $(1,0)$\\
\bottomrule
\end{tabular}
\end{center}
In particular, the factor $A(b)$ in \eqref{eq:LD-gap} and \eqref{eq:LD-speed}
accounts for the coexistence equilibrium when $b<1$.


For $\beta>0$, $\beta\ne1$, we use the established spreading result
for \eqref{eq:original} with a nonzero, nonnegative compactly supported
$u$-component and a resident $v$-component bounded away from zero;
see \cite{LLW,LWL} and the discussion following (1.14) in \cite{WXXZ}.
Its $u$-invasion speed equals the minimal traveling-wave speed
$c^*(a,\beta,d,r)$. The initial datum used here is admissible:
$f_a$ is continuous, compactly supported, strictly positive on $(-1,1)$,
and the original resident datum is identically one. In particular, the
spreading statement is used in its usual interior-convergence and
exterior-decay forms, not merely as an estimate on the location of a level set.
For the specific initial data in \eqref{eq:LD-aux}, the cited results give
\begin{align}
\sup_{|x|\le ct}|z_1(t,x;\beta)-A(\beta)|&\longrightarrow0
&&\text{for }0<c<c^*(a,\beta,d,r),\label{eq:LD-inside}\\
\sup_{|x|\ge ct}z_1(t,x;\beta)&\longrightarrow0
&&\text{for }c>c^*(a,\beta,d,r).\label{eq:LD-outside}
\end{align}
We also use the continuity of $\beta\mapsto c^*(a,\beta,d,r)$ on
$(0,\infty)$, as recalled and treated in \cite[Sections 5--6]{WXXZ}.

For completeness, these facts imply \eqref{eq:LD-inside}--\eqref{eq:LD-outside}
at $\beta=1$ without requiring an exponential stability estimate at its
degenerate equilibrium. The initial data in \eqref{eq:LD-aux} do not depend
on $\beta$, and $F_\beta$ is componentwise nondecreasing in $\beta$.
Comparison therefore gives
\begin{equation}\label{eq:LD-parameter-order}
\LDz(t,x;\beta_-)\le\LDz(t,x;1)\le\LDz(t,x;\beta_+)
\quad\text{if }0<\beta_-<1<\beta_+.
\end{equation}
Fix $0<c<c^*(a,1,d,r)$. By continuity, $c<c^*(a,\beta_-,d,r)$ for all
$\beta_-<1$ sufficiently close to $1$. Applying \eqref{eq:LD-inside} to
the lower solution and using $z_1(t,x;1)\le1$ yields
\[
\limsup_{t\to\infty}\sup_{|x|\le ct}(1-z_1(t,x;1))
\le1-A(\beta_-).
\]
Letting $\beta_-\uparrow1$ proves \eqref{eq:LD-inside} at $1$.
For $c>c^*(a,1,d,r)$, choose $\beta_+>1$ sufficiently close to $1$ that
$c>c^*(a,\beta_+,d,r)$. The upper comparison in
\eqref{eq:LD-parameter-order} and \eqref{eq:LD-outside} prove the exterior
statement at $1$. At $\beta=0$, $z_2\equiv0$ and the first equation is
the scalar logistic equation with growth $1-a$; its speed is $c_0$.

\section{Recovering the minimal monostable speed from total mass}

\begin{lemma}\label{lem:LD-mass}
For every fixed $\beta>0$, the solution of \eqref{eq:LD-aux} satisfies
\begin{equation}\label{eq:LD-continuous-mass}
\lim_{t\to\infty}\frac1t\int_\R z_1(t,x;\beta)\dd x
=2A(\beta)c^*(a,\beta,d,r).
\end{equation}
\end{lemma}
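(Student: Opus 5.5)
Write $c^*=c^*(a,\beta,d,r)$ and $A=A(\beta)$. The plan mirrors the proof of Lemma~2.4: the mass of $z_1(t,\cdot)$ is compared with the mass of the plateau $A\,\one_{\{|x|\le c^*t\}}$, which equals $2Ac^*t$. The inputs are the spreading statements \eqref{eq:LD-inside}--\eqref{eq:LD-outside}, valid for every $\beta>0$ (including $\beta=1$ by the argument just given), together with the invariant bounds \eqref{eq:LD-invariant}. The latter give $0\le z_1\le A$ everywhere.

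\emph{Lower bound.} Fix $0<c<c^*$. Since $z_1\ge0$,
\[
\int_\R z_1(t,x)\dd x\ge\int_{-ct}^{ct}z_1(t,x)\dd x\ge 2ct\Bigl(A-\sup_{|x|\le ct}|z_1(t,x)-A|\Bigr).
\]
By \eqref{eq:LD-inside}, $\liminf_{t\to\infty}t^{-1}\int_\R z_1\dd x\ge 2Ac$. Letting $c\uparrow c^*$ gives the lower bound $2Ac^*$.

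\emph{Upper bound.} Fix $c>c^*$. On $|x|\le ct$ use $z_1\le A$, which contributes at most $2Act$. On $|x|\ge ct$ the statement \eqref{eq:LD-outside} only gives uniform smallness, not integrability, so an exponential tail envelope is needed. I would proceed as in Lemma~2.3. On $[0,1]^2$ we have $F_{\beta,1}(p,q)\le p$, so $z_1$ is a subsolution of $w_t=w_{xx}+w$. Hence
\[
z_1(t,x)\le e^{t}\,(G_{1,t}*f_a)(x)\le C e^{t}e^{-(|x|-1)^2/(4t)}.
\]
Choose $K$ large, for instance $K>2+c$. The integral over $|x|\ge Kt$ is then bounded by $Ce^{t-(K-1)^2t/4}\cdot O(\sqrt t)$, which tends to zero. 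On the annulus $ct\le|x|\le Kt$ the integral is at most $2(K-c)t\sup_{|x|\ge ct}z_1(t,x)=o(t)$ by \eqref{eq:LD-outside}. Together these give $\limsup t^{-1}\int_\R z_1\dd x\le 2Ac$, and letting $c\downarrow c^*$ finishes the proof.

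The main obstacle is the upper bound, where the exterior region must be controlled in $L^1$ and not just pointwise. The remedy is to combine the crude linear heat envelope, valid because $F_{\beta,1}\le z_1$ on the invariant square, with the $o(1)$ exterior decay on the bounded-speed annulus. A minor check is that \eqref{eq:LD-inside} at $\beta=1$ yields convergence to $A(1)=1$, as shown in the preceding section. Integrability of $z_1(t,\cdot)$ for each $t$ follows from the same Gaussian envelope.
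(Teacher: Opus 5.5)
Your proposal is correct and follows the paper's proof essentially line by line. The lower bound comes from \eqref{eq:LD-inside}, and the upper bound uses the same three-region split: $z_1\le A$ on $|x|\le ct$, the exterior decay \eqref{eq:LD-outside} on the annulus, and the heat envelope $z_1\le e^t(G_{1,t}*f_a)$ (from $F_{\beta,1}\le p$) on $|x|\ge Kt$. One small arithmetic slip: with your cruder exponent $(K-1)^2t/4$ you need $K>3$, so the example $K>2+c$ fails when $c<1$. Keeping the exact exponent $(Kt-1)^2/(4t)$, as the paper does, makes any $K>2$ work.
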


\begin{proof}
Write $A=A(\beta)$, $c^*=c^*(a,\beta,d,r)$, and
$\sigma=(1-a)/2=\int_\R f_a$. Since $F_{\beta,1}(p,q)\le p$ on
$[0,1]^2$, scalar comparison gives
\begin{equation}\label{eq:LD-heat-bound}
z_1(t,x)\le e^t(G_{1,t}*f_a)(x).
\end{equation}
This proves integrability at every finite time. For $K>2$ and $Kt>1$,
the support of $f_a$ and the Gaussian tail bound imply
\begin{align}
\int_{|x|\ge Kt}z_1(t,x)\dd x
&\le \sigma e^t\int_{|y|\ge Kt-1}G_{1,t}(y)\dd y\notag\\
&\le2\sigma\exp\!\left(t-\frac{(Kt-1)^2}{4t}\right)
\longrightarrow0.\label{eq:LD-tail}
\end{align}
Indeed, for a Gaussian of variance $2t$, applying the exponential moment
bound separately on the two tails gives
$\int_{|y|\ge R}G_{1,t}(y)\dd y\le2e^{-R^2/(4t)}$.

For $0<c<c^*$, \eqref{eq:LD-inside} gives
\[
\liminf_{t\to\infty}\frac1t\int_\R z_1(t,x)\dd x\ge2cA.
\]
Let $c\uparrow c^*$ to obtain the required lower bound.
For the upper bound, fix $c>c^*$ and then $K>\max\{2,c\}$. Split the
integral into $|x|\le ct$, $ct<|x|<Kt$, and $|x|\ge Kt$.
By \eqref{eq:LD-invariant}, the first part is at most $2cAt$.
By \eqref{eq:LD-outside}, the second part is at most
$2Kt\sup_{|x|\ge ct}z_1(t,x)=o(t)$. The final part tends to zero by
\eqref{eq:LD-tail}. Thus the upper limit is at most $2cA$.
Letting $c\downarrow c^*$ proves \eqref{eq:LD-continuous-mass}.
\end{proof}

\section{Gaussian approximation for the monostable problem}

All estimates in this section are for fixed $a,d,r,\beta$. No estimate
uniform over the unbounded $\beta$ interval will be needed.
For a vector-valued function $w=(w_1,w_2)$, use
\[
\norm{w}_1=\int_\R(|w_1|+|w_2|),\qquad
\norm{w}_\infty=\max_{i=1,2}\norm{w_i}_{L^\infty(\R)}.
\]
Define the diagonal heat semigroup
\[
S_t w=(G_{1,t}*w_1,G_{d,t}*w_2).
\]
It is a contraction in both norms.


Let $T_h(z)=z+hF_\beta(z)$ and $\mathcal Q_h=S_hT_h$.
If $h\le K_\beta^{-1}$, the entries of $DT_h$ on $[0,1]^2$ satisfy
\begin{align*}
\partial_p(T_h)_1&\ge1-h(1+a)>0,&
\partial_q(T_h)_1&=hap\ge0,\\
\partial_p(T_h)_2&=hr\beta(1-q)\ge0,&
\partial_q(T_h)_2&\ge1-hr(1+\beta)>0.
\end{align*}
Thus $T_h$ is order preserving. Since $T_h(0)=0$ and
$T_h(E_\beta)=E_\beta$, the interval $[0,E_\beta]$ is preserved by
$T_h$ and by $\mathcal Q_h$. The choice \eqref{eq:LD-steps} satisfies the
step restriction, so every iterate in \eqref{eq:LD-recursion} obeys the
bounds \eqref{eq:LD-invariant}.

Each iterate is integrable: initially this is immediate, and on the cube
\[
0\le(T_h)_1\le(1+h)p,\qquad
0\le(T_h)_2\le q+hr\beta p.
\]
The heat semigroup preserves integrals of nonnegative integrable functions.
In particular, $M_m$ is finite and also has the exact finite-sum expression
\begin{equation}\label{eq:LD-mass-sum}
M_m=\sigma+h\sum_{k=0}^{N-1}
\int_\R p_k(1-a-p_k+a q_k)\dd x.
\end{equation}

We next derive the regularity estimates needed to control the
approximation error.

Set
\begin{equation}\label{eq:LD-constants}
L=3+r(1+2\beta),\qquad
H=2+2a+2r(1+\beta),\qquad d_0=\max\{1,d\}.
\end{equation}
On the cube, both the row and column sums of the absolute Jacobian are
bounded by $L$. Hence, pointwise and after integration,
\begin{equation}\label{eq:LD-derivative-bounds}
\begin{aligned}
\norm{DF_\beta(z)w}_j&\le L\norm{w}_j\quad(j=1,\infty),\\
\norm{D^2F_\beta(z)[v,w]}_1&\le H\norm{v}_\infty\norm{w}_1.
\end{aligned}
\end{equation}
For clarity, the two Hessian components are
\[
-2v_1w_1+a(v_1w_2+v_2w_1),\qquad
r\{-\beta(v_1w_2+v_2w_1)+2v_2w_2\}.
\]
These formulas directly verify the bound with $H$ above.

The initial function belongs to $W^{2,1}(\R)\cap W^{2,\infty}(\R)$.
Its first derivative vanishes at $x=\pm1$, so its distributional second
derivative has no point masses there. Its relevant norms are
\begin{equation}\label{eq:LD-seed-norms}
\begin{aligned}
B_0&=\norm{\LDz(0)}_1=\sigma,
&B_1&=\norm{\LDz_x(0)}_1=2\sigma,\\
B_\infty&=\norm{\LDz_x(0)}_\infty=\frac{\pi\sigma}{2},
&B_2&=\norm{\LDz_{xx}(0)}_1=2\pi\sigma.
\end{aligned}
\end{equation}
From Duhamel's formula, $F_\beta(0)=0$, and
\eqref{eq:LD-derivative-bounds}, Gronwall's inequality gives
\begin{equation}\label{eq:LD-z-norms}
\norm{\LDz(t)}_1\le B_0e^{Lt},\qquad
\norm{\LDz_x(t)}_1\le B_1e^{Lt},\qquad
\norm{\LDz_x(t)}_\infty\le B_\infty e^{Lt}.
\end{equation}
For the second derivative, differentiating the reaction term twice yields
\[
\partial_{xx}F_\beta(\LDz)
=DF_\beta(\LDz)\LDz_{xx}+D^2F_\beta(\LDz)[\LDz_x,\LDz_x].
\]
Duhamel's formula and \eqref{eq:LD-z-norms} therefore give
\begin{align*}
\norm{\LDz_{xx}(t)}_1
&\le e^{Lt}B_2+
H B_\infty B_1 e^{Lt}\int_0^t e^{Ls}\dd s\\
&\le \widehat B_2 e^{2Lt},\qquad
\widehat B_2=B_2+\frac{H B_\infty B_1}{L}.
\end{align*}
To justify the derivative estimates at the initial time, one may use
spatial difference quotients in the $L^1$ mild equation. Since $f_a$ is
compactly supported and belongs to $W^{2,1}(\R)\cap W^{1,\infty}(\R)$,
its first difference quotients converge in $L^1$ and uniformly to $f_a'$;
indeed, $f_a'$ is uniformly continuous. Writing
$\delta_\varepsilon f(x)=[f(x+\varepsilon)-f(x)]/\varepsilon$, the
reaction difference is
\[
F_\beta(\LDz(t,x+\varepsilon))-F_\beta(\LDz(t,x))
=B_\varepsilon(t,x)\delta_\varepsilon\LDz(t,x),
\]
where $\norm{B_\varepsilon}_{\ell^j\to\ell^j}\le L$ for
$j=1,\infty$. Gronwall's inequality first yields the bounds for
$\delta_\varepsilon\LDz$, and their limit is $\LDz_x$ in
$C([0,T];L^1)\cap C([0,T];BUC)$. Its mild equation has coefficient
$DF_\beta(\LDz)$ and initial value $(f_a',0)$.

Apply $\delta_\varepsilon$ to this first-derivative equation. The identity
\[
\begin{aligned}
\delta_\varepsilon[DF_\beta(\LDz)\LDz_x]
={}&DF_\beta(\LDz(\cdot+\varepsilon))
\delta_\varepsilon\LDz_x\\
&+D^2F_\beta[\delta_\varepsilon\LDz,\LDz_x]
\end{aligned}
\]
has its second term bounded in $L^1$ by
$H\norm{\delta_\varepsilon\LDz}_\infty\norm{\LDz_x}_1$.
Moreover, $\delta_\varepsilon f_a'\to f_a''$ in $L^1$,
$DF_\beta(\LDz(\cdot+\varepsilon))\to DF_\beta(\LDz)$ uniformly,
and the source converges in $C([0,T];L^1)$ by the established convergence
of $\delta_\varepsilon\LDz$ and the bound on $\LDz_x$.
Subtracting the corresponding mild equations and using Gronwall's
inequality therefore shows that $\delta_\varepsilon\LDz_x$ converges
in $C([0,T];L^1)$ to $\LDz_{xx}$, with initial value $(f_a'',0)$.
No uniform convergence of $\delta_\varepsilon f_a'$ is required: $f_a''$
is allowed to have jump discontinuities at $x=\pm1$.
To make the last convergence quantitative, denote by $y$ the solution of
the limiting linear mild equation for $\LDz_{xx}$ and put
$e_\varepsilon=\delta_\varepsilon\LDz_x-y$.
Write $A_\varepsilon=DF_\beta(\LDz(\cdot+\varepsilon))$,
$A=DF_\beta(\LDz)$, and
$H_\varepsilon=D^2F_\beta[\delta_\varepsilon\LDz,\LDz_x]$,
$H_0=D^2F_\beta[\LDz_x,\LDz_x]$. Define
\[
\begin{aligned}
\eta_\varepsilon(T)={}&\|\delta_\varepsilon f_a'-f_a''\|_{L^1}
+T\sup_{0\le t\le T}\|A_\varepsilon(t)-A(t)\|_\infty
  \sup_{0\le t\le T}\|y(t)\|_1\\
&+T\sup_{0\le t\le T}\|H_\varepsilon(t)-H_0(t)\|_1.
\end{aligned}
\]
The established convergence in $C([0,T];L^1)$ and in the uniform
norm gives $\eta_\varepsilon(T)\to0$. Subtracting the two mild
equations, and using $\|A_\varepsilon\|_{\ell^1\to\ell^1}\le L$, yields
\[
\sup_{0\leq t\leq T}\|e_\varepsilon(t)\|_1
\leq \eta_\varepsilon(T)+L\int_0^T\|e_\varepsilon(s)\|_1\,\mathrm ds
\leq e^{LT}\eta_\varepsilon(T).
\]
Identifying $y$ as the distributional second spatial derivative yields
$\LDz_{xx}\in C([0,T];L^1)$. In particular, the preceding inequalities
and the following chain rules hold in $L^1$ also at $t=0$. Consequently,
\begin{align}
\norm{\LDz_t(t)}_1&\le B_t e^{2Lt},&
B_t&=d_0\widehat B_2+LB_0,\label{eq:LD-zt}\\
\norm{\LDD\partial_{xx}F_\beta(\LDz(t))}_1&\le B_F e^{2Lt},&
B_F&=d_0(L\widehat B_2+H B_\infty B_1).
\label{eq:LD-Fxx}
\end{align}

These estimates yield the following bound for the Gaussian
approximation.

\begin{lemma}\label{lem:LD-error}
Let $\LDZ_{k+1}=\mathcal Q_h\LDZ_k$, $\LDZ_0=\LDz(0)$, and $T=Nh$.
When $h\le K_\beta^{-1}$,
\begin{equation}\label{eq:LD-global-error}
\norm{\LDZ_N-\LDz(T)}_1\le C_{\mathrm{loc}}T h e^{2LT},\qquad
C_{\mathrm{loc}}=\tfrac12(LB_t+B_F).
\end{equation}
\end{lemma}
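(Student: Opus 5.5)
The argument repeats the proof of Proposition~4.2, replacing its constants by those of \eqref{eq:LD-constants}, \eqref{eq:LD-zt} and \eqref{eq:LD-Fxx}. There are two steps: a local truncation estimate for one step of $\mathcal Q_h$ along the exact orbit, and then a Lipschitz stability bound for $\mathcal Q_h$ on the invariant interval $[0,E_\beta]$.

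\emph{Local error.} Duhamel's formula on $[t,t+h]$ gives
\[
\LDz(t+h)-\mathcal Q_h\LDz(t)=\int_0^h\Bigl(S_{h-\tau}\bigl[F_\beta(\LDz(t+\tau))-F_\beta(\LDz(t))\bigr]+S_{h-\tau}(I-S_\tau)F_\beta(\LDz(t))\Bigr)\dd\tau .
\]
This uses $S_h=S_{h-\tau}S_\tau$ and $\mathcal Q_h\LDz(t)=S_h\LDz(t)+hS_hF_\beta(\LDz(t))$. All terms lie in $L^1$, since $\LDz(t)\in L^1$ by \eqref{eq:LD-z-norms}.

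For the first term, the Lipschitz bound \eqref{eq:LD-derivative-bounds} with $j=1$ and the convexity of the cube give
\[
\norm{F_\beta(\LDz(t+\tau))-F_\beta(\LDz(t))}_1\le L\int_t^{t+\tau}\norm{\LDz_t}_1 .
\]
Then \eqref{eq:LD-zt} bounds this by $L\tau B_te^{2L(t+h)}$. This needs $\LDz\in C^1([0,T];L^1)$. That follows as in Section~4 from $\LDz_{xx}\in C([0,T];L^1)$ and the integrated form of the equation.

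For the second term, $F_\beta(\LDz(t))\in W^{2,1}$ by the chain-rule identities already established. So $(S_\tau-I)f=\int_0^\tau S_\xi\LDD f''\dd\xi$ applies, and \eqref{eq:LD-Fxx} bounds the term by $\tau B_Fe^{2Lt}$. The contraction property of $S_{h-\tau}$ then gives, after integrating in $\tau$,
\[
\norm{\LDz(t+h)-\mathcal Q_h\LDz(t)}_1\le \tfrac{h^2}{2}(LB_t+B_F)e^{2L(t+h)}=C_{\mathrm{loc}}h^2e^{2L(t+h)}.
\]

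\emph{Stability and summation.} The iterates and the exact solution both take values in $[0,E_\beta]\subset[0,1]^2$: the exact solution by \eqref{eq:LD-invariant}, the iterates by the order preservation shown above for $h\le K_\beta^{-1}$. Both are integrable. Hence the mean-value formula and \eqref{eq:LD-derivative-bounds} give
\[
\norm{\mathcal Q_h f-\mathcal Q_h g}_1\le(1+hL)\norm{f-g}_1 .
\]
Telescoping,
\[
\norm{\LDZ_N-\LDz(T)}_1\le\sum_{k=0}^{N-1}(1+hL)^{N-1-k}C_{\mathrm{loc}}h^2e^{2L(k+1)h}.
\]
With $1+hL\le e^{hL}$ and $L(N-1-k)h+2L(k+1)h\le 2LNh$, each summand is at most $C_{\mathrm{loc}}h^2e^{2LT}$. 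Summing the $N$ terms gives $C_{\mathrm{loc}}Nh^2e^{2LT}=C_{\mathrm{loc}}The^{2LT}$.

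\emph{Expected obstacle.} The main technical point is justifying the local-error identity in $L^1$ up to $t=0$. This requires $\LDz\in C^1([0,T];L^1)$ and $F_\beta(\LDz(t))\in W^{2,1}$ with the stated bounds also at $t=0$, where $f_a''$ has jumps. The difference-quotient argument given just before the lemma supplies exactly $\LDz_{xx}\in C([0,T];L^1)$, so I would invoke it rather than reprove it.
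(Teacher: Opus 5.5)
Your proposal is correct and matches the paper's proof essentially step for step. You use the same Duhamel splitting of the one-step defect into a time-variation term, bounded via \eqref{eq:LD-zt}, and a semigroup-variation term $S_{h-\tau}(I-S_\tau)F_\beta(\LDz(t))$, bounded via \eqref{eq:LD-Fxx}; together these give $C_{\mathrm{loc}}h^2e^{2L(t+h)}$. The subsequent $(1+hL)$ $L^1$-Lipschitz stability of $\mathcal Q_h$ on the invariant cube and the exponent bookkeeping in the telescoped sum are the same as in the paper.
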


\begin{proof}
For an exact state at time $t$, Duhamel's formula gives
\begin{align*}
\LDz(t+h)-\mathcal Q_h\LDz(t)
=\int_0^h\bigl[S_{h-s}F_\beta(\LDz(t+s))
                  -S_hF_\beta(\LDz(t))\bigr]\dd s.
\end{align*}
Split the integrand into
\[
S_{h-s}\bigl(F_\beta(\LDz(t+s))-F_\beta(\LDz(t))\bigr)
 +(S_{h-s}-S_h)F_\beta(\LDz(t)).
\]
The first term has $L^1$ norm at most
$LB_t s e^{2L(t+h)}$ by \eqref{eq:LD-zt}.
For the second term use $S_h=S_{h-s}S_s$ and
\[
\norm{(I-S_s)w}_1
\le s\norm{\LDD w_{xx}}_1
\quad\text{for }w\in[W^{2,1}(\R)]^2.
\]
This inequality follows by integrating
$\partial_\tau S_\tau w=S_\tau\LDD w_{xx}$ from $0$ to $s$.
By \eqref{eq:LD-Fxx}, the second term has norm at most
$B_F s e^{2Lt}$. Integration in $s$ now yields
\begin{equation}\label{eq:LD-local-error}
\norm{\LDz(t+h)-\mathcal Q_h\LDz(t)}_1
\le C_{\mathrm{loc}}h^2e^{2L(t+h)}.
\end{equation}
For two states taking values in the cube,
\[
\norm{\mathcal Q_h w-\mathcal Q_h\widetilde w}_1
\le(1+hL)\norm{w-\widetilde w}_1.
\]
Iterating this estimate with \eqref{eq:LD-local-error} gives
\begin{align*}
\norm{\LDZ_N-\LDz(T)}_1
&\le C_{\mathrm{loc}}h^2
\sum_{j=0}^{N-1}(1+hL)^{N-1-j}e^{2L(j+1)h}\\
&\le C_{\mathrm{loc}}N h^2e^{2LT}
=C_{\mathrm{loc}}T h e^{2LT}.
\end{align*}
\end{proof}

With $T=m$ and the choice \eqref{eq:LD-steps}, the estimate becomes
\begin{equation}\label{eq:LD-diagonal}
\left|M_m-\int_\R z_1(m,x;\beta)\dd x\right|
\le \frac{C_{\mathrm{loc}}}{K_\beta}
\exp(-m^2+2Lm)\longrightarrow0.
\end{equation}
Combining \eqref{eq:LD-diagonal} with Lemma~\ref{lem:LD-mass} proves
\eqref{eq:LD-speed}. Continuity of the positive-part function then gives
\begin{equation}\label{eq:LD-gap-limit}
D_m(a,d,r;\beta)\longrightarrow
\Delta(\beta):=c^*(a,\beta,d,r)-c_0\ge0
\quad\text{for every }\beta>0.
\end{equation}

\section{Proof of Theorem \ref{thm:LD-main}}

For $n,m\ge1$, let
\begin{equation}\label{eq:LD-I}
I_{n,m}=\int_0^\infty e^{-\beta}
              \bigl(1-e^{-nD_m(a,d,r;\beta)}\bigr)\dd\beta.
\end{equation}
The integrand is measurable. Indeed, $N_m(\beta)$ is integer valued and
constant on Borel intervals. On each such interval, $h=m/N_m$ is fixed,
and the finite reaction and heat operations depend continuously on
$\beta$ in $L^1$ on compact parameter subintervals. Their integrals are
therefore Borel functions. Furthermore,
\begin{equation}\label{eq:LD-domination}
0\le e^{-\beta}(1-e^{-nD_m})\le e^{-\beta},\qquad
\int_0^\infty e^{-\beta}\dd\beta=1.
\end{equation}
Thus no estimate uniform in $\beta$ is needed for the splitting error.

\begin{proof}[Proof of Theorem~\ref{thm:LD-main}]
The speed representation and \eqref{eq:LD-criterion} follow from
\eqref{eq:LD-speed} and \eqref{eq:LD-gap-limit}. To obtain the threshold,
fix $n$. By \eqref{eq:LD-gap-limit}, \eqref{eq:LD-domination}, and dominated
convergence,
\begin{equation}\label{eq:LD-inner-limit}
\lim_{m\to\infty}I_{n,m}
=\int_0^\infty e^{-\beta}(1-e^{-n\Delta(\beta)})\dd\beta.
\end{equation}
Write $B=b_0(a,d,r)$. The established threshold theorem
\eqref{eq:LD-threshold} says exactly that $\Delta(\beta)=0$ on $(0,B]$
and $\Delta(\beta)>0$ on $(B,\infty)$. Hence
\[
1-e^{-n\Delta(\beta)}\longrightarrow
\boldsymbol 1_{\{\beta>B\}}
\quad\text{as }n\to\infty.
\]
Applying dominated convergence once more to \eqref{eq:LD-inner-limit}
gives
\begin{equation}\label{eq:LD-outer-limit}
\lim_{n\to\infty}\lim_{m\to\infty}I_{n,m}
=\int_B^\infty e^{-\beta}\dd\beta=e^{-B}.
\end{equation}
Since $0<B<\infty$, taking the negative logarithm proves
\eqref{eq:LD-main}.
\end{proof}

\section{Comparison and scope of the two threshold representations}

The two thresholds concern disjoint regimes of the same differential
system. For $a>1$, one locates the unique zero of the bistable
wave speed by a signed excess-mass limit; for $0<a<1$, one locates the
boundary of the plateau where the minimal monostable wave speed equals
$2\sqrt{1-a}$, using a nonnegative speed gap. In particular,
$b_*(a,d,r)$ and $b_0(a,d,r)$ are not two formulas for the
same function, and neither construction makes an assertion at $a=1$.

\begin{enumerate}
\item The parameter formula \eqref{eq:LD-main} depends only on $a,d,r$.
The symbol $\beta$ is an integration variable. The functions $p_k,q_k$
in \eqref{eq:LD-recursion} are explicitly specified finite integral
expressions. Expanding the recursion leaves Gaussian kernels,
polynomials, the initial cosine function, and finite multiple integrals.

\item Allowing $b<1$ is essential to the full threshold question.
If $b_0<1$, the transition lies in weak competition and every
$b>1$ is nonlinearly selected. If $b_0=1$, the degenerate
case is the transition. If $b_0>1$, linear determinacy
persists on $1<b\le b_0$.

\item The two limits in \eqref{eq:LD-main} are ordered. The first recovers
the exact speed gap; the second tests whether that nonnegative gap is
strictly positive. The proof does not justify replacing them by an
arbitrary common sequence $n=n(m)\to\infty$. Such a replacement needs
additional control of the large-time spreading error, beyond the
explicit splitting estimate \eqref{eq:LD-diagonal}.

\item The expression is an exact constructive integral--limit
representation. It does not assert a formula containing a fixed finite
number of elementary or classical special functions. The threshold
theorem of Wu--Xiao--Zhou \cite{WXXZ} is an input; the Gaussian construction and
the parameter integration give the additional representation.
\end{enumerate}

\end{bluechange}

\end{document}